\newtheorem{theorem}{Theorem}[section]
\newtheorem{lemma}{Lemma}[section]
\newtheorem{definition}{Definition}[section]
\newtheorem{corollary}{\bf Corollary}[section]
\newcommand{\wsat}{\mathrm{wsat}}
\newcommand{\reg}{\mathrm{reg}}
\newcommand{\pcf}{\mathrm{pcf}}
\newcommand{\tcf}{\mathrm{tcf}}
\newcommand{\cf}{\mathrm{cf}}
\newcommand{\acc}{\mathrm{acc}}
\newcommand{\Ch}{\mathrm{Ch}}
\newcommand\blfootnote[1]{%
  \begingroup
  \renewcommand\thefootnote{}\footnote{#1}%
  \addtocounter{footnote}{-1}%
  \endgroup
}
\title{An Extension of Shelah's Trichotomy Theorem}
\author{Shehzad Ahmed}
\address{Ohio University \\
Dept of Math 321 Morton Hall \\
Athens, Ohio 45701-2979}
\email{sa066513@ohio.edu}
\begin{document}

\begin{abstract}
In \cite{Sh506}, Shelah develops the theory of $\pcf_I(A)$ without the assumption that $|A|<\min (A)$, going so far as to get generators for every $\lambda\in\pcf_I(A)$ under some assumptions on $I$. Our main theorem is that we can also generalize Shelah's trichotomy theorem to the same setting. Using this, we present a different proof of the existence of generators for $\pcf_I(A)$ which is more in line with the modern exposition. Finally, we discuss some obstacles to further generalizing the classical theory.
\end{abstract}
\maketitle

\section{Introduction}
\blfootnote{I would like to thank Todd Eisworth for his assistance with the organization of the manuscript, and an anonymous referee for their helpful remarks which enhanced the clarity of the paper.} 

The pcf theory as presented in \cite{Sh} has proven to be a powerful tool for analyzing the combinatorial structure at singular cardinals as well as their successors. Perhaps the most well-known consequence of the pcf-theoretic machinery is the following theorem due to Shelah:

\begin{theorem}[Shelah]
\begin{equation*}
\aleph_\omega^{\aleph_0}<\max\{\aleph_{\omega_4},(2^{\aleph_0})^+\}.
\end{equation*}
\end{theorem}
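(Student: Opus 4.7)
I would follow the classical pcf-theoretic route. First, a trivial reduction: if $2^{\aleph_0} \geq \aleph_\omega$, then $\aleph_\omega^{\aleph_0} \leq (2^{\aleph_0})^{\aleph_0} = 2^{\aleph_0} < (2^{\aleph_0})^+$, and we are done. So I may assume $2^{\aleph_0} < \aleph_\omega$. Under this assumption the set $A := \{\aleph_n : 1 \leq n < \omega\}$ is progressive ($|A| = \aleph_0 < \aleph_1 = \min A$), so the classical pcf theory (with $I$ the ideal of finite subsets of $A$) is available and none of the refinements from \cite{Sh506} are needed.

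Second, I would identify $\aleph_\omega^{\aleph_0}$ as a pcf cardinal. Standard cardinal arithmetic under $2^{\aleph_0} < \aleph_\omega$ gives $\aleph_\omega^{\aleph_0} = \cf\bigl([\aleph_\omega]^{\aleph_0}, \subseteq\bigr)$, and pcf theory identifies the latter with $\max \pcf(A)$ via the analysis of cofinalities of reduced products: every cofinal family in $[\aleph_\omega]^{\aleph_0}$ corresponds to an unbounded family in $\prod A$ modulo finite of size $\max \pcf(A)$, and conversely. The problem therefore reduces to showing $\max \pcf(A) < \aleph_{\omega_4}$.

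The heart of the argument is the Shelah bound $\sup \pcf(A) < \aleph_{|A|^{+4}}$ for progressive $A$, which for $|A| = \aleph_0$ specializes (via $\aleph_{\aleph_4} = \aleph_{\omega_4}$) to $\sup \pcf(A) < \aleph_{\omega_4}$. To prove this, I would construct generators $B_\lambda \subseteq A$ for the ideals $J_{<\lambda}[A]$ for each $\lambda \in \pcf(A)$ (so that $B \in J_{\leq \lambda}[A]$ iff $B \setminus B_\lambda \in J_{<\lambda}[A]$), and then run a rank/localization argument based on Shelah's localization theorem, which says that every $\lambda \in \pcf(\pcf(A))$ lies in $\pcf(A_0)$ for some $A_0 \subseteq \pcf(A)$ of size at most $|A|$. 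The main obstacle is precisely this pcf bound: constructing the generators requires the existence of exact upper bounds and Shelah's original trichotomy theorem, and the localization/rank-theoretic step is what forces both the hypothesis $|A| < \min A$ and the specific appearance of $\aleph_4$ in the exponent.
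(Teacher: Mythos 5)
The paper does not prove this theorem at all: it appears in the introduction purely as background motivation and is cited directly to Shelah's book \cite{Sh}, so there is no internal argument against which to compare your attempt. On its own merits, your outline is a correct high-level summary of the standard route. The reduction to the case $2^{\aleph_0}<\aleph_\omega$ is fine, and under that hypothesis one does have $\aleph_\omega^{\aleph_0}=\cf\bigl([\aleph_\omega]^{\aleph_0},\subseteq\bigr)$ (since always $\aleph_\omega^{\aleph_0}=\max\{2^{\aleph_0},\cf([\aleph_\omega]^{\aleph_0},\subseteq)\}$). The identification of $\cf([\aleph_\omega]^{\aleph_0},\subseteq)$ with $\max\pcf(A)$ and the bound $\max\pcf(A)<\aleph_{|A|^{+4}}$ for a progressive interval $A$ are indeed the two pillars.

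But you should be aware that what you have written is an outline, not a proof: the cov-vs-pp identification and the $|A|^{+4}$ bound are themselves substantial theorems, and the latter needs considerably more than ``construct generators and apply localization.'' The actual argument runs through the no-holes theorem (so that $\pcf(A)$ is an interval of regulars and the size bound $|\pcf(A)|\le|A|^{+3}$ translates into the ordinal bound $\max\pcf(A)<\aleph_{|A|^{+4}}$), transitive generators built from $\kappa$-presentable elementary submodels, and a delicate elevation/rank argument; the localization theorem is one input among several, not the whole engine. Also, your gloss of localization (``every $\lambda\in\pcf(\pcf(A))$ lies in $\pcf(A_0)$ for some $A_0\subseteq\pcf(A)$ of size $\le|A|$'') runs together the localization theorem proper with the separate theorem that $\pcf(\pcf(A))=\pcf(A)$ for progressive intervals; both are used, but they are distinct statements. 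None of this affects the paper itself, since the theorem plays no role there beyond motivating the move to $\pcf_I(A)$ for sets $A$ that fail to be progressive---indeed, Section 4 of the paper shows that no-holes, and hence this kind of cardinal-arithmetic bound, does not survive in the relativized setting.
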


This contrasts greatly with the situation for regular cardinals, and tells us that we can get meaningful results about the power of singular cardinals in ZFC. On the other hand, we know that some of this machinery can only work for singular cardinals which are not fixed points of the $\aleph$-function. Given suitable large cardinal hypotheses, one can use Prikry-type forcings to blow up the power of some $\aleph$-fixed points to be arbitrarily large (see \cite{Gi} for an overview). So if the pcf machinery can be generalized to $\aleph$-fixed points, this can only be done in a restricted manner.

In \cite{Sh506}, Shelah does precisely this. The pcf machinery is relativized to particular ideals over some set $A$ which need not satisfy $|A|<\min (A)$. In particular, Shelah is able to obtain generators for every $\lambda\in \pcf_I(A)$. The usual proof of the existence of generators requires obtaining universal cofinal sequences for each $\lambda\in\pcf(A)$, and then showing that exact upper bounds for such sequences yield generators. In the classical case, one can make use of Shelah's trichotomy theorem \cite{Sh}:

\begin{theorem}[Chapter II, Claim 1.2 of \cite{Sh}]\label{thm:Trichotomy} Suppose that $\lambda$ is a regular cardinal with $\lambda>|A|^+$, $I$ is an ideal over $A$, and $\vec f=\langle f_\alpha : \alpha<\lambda\rangle$ is an $<_I$-increasing sequence of functions from $A$ to $\mathrm{ON}$. Then $\vec f$ satisfies at least one of the following conditions:\\
\begin{enumerate}
\item \underline{$\mathrm{Good}$}: $\vec{f}$ has an exact upper bound $f\in{}^A\mathrm{ON} $ such that $\cf(f(a))>|A|$ for all $a\in A$.\\
\item \underline{$\mathrm{Bad}$}: There are sets $S(a)$ for each $a\in A$ such that $|S(a)|\leq|A|$ and an ultrafilter $D$ over $A$ disjoint from $I$ such that, for all $\xi<\lambda$, there exists some $h_\xi\in \prod_{a\in A}S(a)$ and some $\eta<\lambda$ such that $f_\xi<_D h_\xi<_D f_\eta$.\\
\item \underline{Ugly}: There is a function $g: A\to\mathrm{ON}$ such that, letting $t_\xi=\{a\in A : f_\xi(a)>g(a)\}$, the sequence $\vec{t}=\langle t_\xi : \xi<\lambda\rangle$ (which is $\subseteq_I$-increasing) does not stabilize modulo $I$. That is, for every $\xi<\lambda$, there is some $\xi<\eta<\lambda$ such that $t_\eta\setminus t_\xi\notin I$.\\
\end{enumerate}
\end{theorem}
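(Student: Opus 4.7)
The plan is to prove the contrapositive: assuming \emph{Ugly} fails, I derive that either \emph{Good} or \emph{Bad} holds. If Ugly fails, then for every $g\in{}^A\mathrm{ON}$ the $\subseteq_I$-increasing sequence $\langle t_\xi^g : \xi<\lambda\rangle$, with $t_\xi^g=\{a\in A : f_\xi(a)>g(a)\}$, stabilizes modulo $I$: there is $\xi_g<\lambda$ with $t_\eta^g\setminus t_{\xi_g}^g\in I$ for all $\eta<\lambda$. This stabilization property is the engine of the argument.

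The crucial step is to use stabilization to produce a $<_I$-exact upper bound $f$ of $\vec f$. I would start from any $\leq_I$-upper bound $g_0$ (for instance $g_0(a)=\sup_{\xi<\lambda}f_\xi(a)$) and recursively shrink: if $g_i$ is an upper bound but not exact, the failure of exactness supplies a $g_{i+1}<_I g_i$, and the stabilization of the trace sequence associated to $g_{i+1}$ lets me arrange (by passing to a tail of $\vec f$ and modifying on an $I$-null set) that $g_{i+1}$ remains a $\leq_I$-upper bound. At limit stages I take pointwise infima, again using stabilization to verify that the upper-bound property is preserved. Because $\lambda$ is regular with $\lambda>|A|^+$, any strictly $<_I$-descending chain of upper bounds must terminate in fewer than $|A|^+$ steps, yielding an exact upper bound $f$.

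Once $f$ is in hand, let $B=\{a\in A : \cf(f(a))\leq|A|\}$. If $B\in I$, redefine $f$ on $B$ to take values of cofinality $|A|^+$; since everything is considered modulo $I$, the modified $f$ remains an exact upper bound and witnesses \emph{Good}. If $B\notin I$, for each $a\in B$ fix $S(a)\subseteq f(a)$ cofinal in $f(a)$ with $|S(a)|=\cf(f(a))\leq|A|$, and for $a\notin B$ set $S(a)=\{f(a)\}$. Extend the dual filter of $I$ together with $\{B\}$ to an ultrafilter $D$ on $A$; then $D\cap I=\emptyset$. For each $\xi<\lambda$ put $h_\xi(a)=\min(S(a)\setminus(f_\xi(a)+1))$ on the set (in $D$) where $f_\xi(a)<f(a)$, and $h_\xi(a)=f(a)$ elsewhere; then $f_\xi<_D h_\xi$, and since $h_\xi<_I f$, exactness of $f$ yields $\eta<\lambda$ with $h_\xi<_I f_\eta$, hence $h_\xi<_D f_\eta$, witnessing \emph{Bad}.

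The main obstacle is the construction of the exact upper bound. The naive shrinking procedure can fail at limit stages because the pointwise infimum of upper bounds need not remain an upper bound, and at successor stages a witness to non-exactness need not itself be an upper bound. Stabilization is exactly what rules out both pathologies, and the bound $|A|^+<\lambda$ on the length of strictly $<_I$-descending chains of upper bounds is what forces termination. Once the exact upper bound is built, the case split on cofinalities is essentially bookkeeping: the hypothesis $\lambda>|A|^+$ enters only through this combinatorial bound.
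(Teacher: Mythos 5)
Your proposed contrapositive structure, the final cofinality case split (the set $B=\{a:\cf(f(a))\le|A|\}$, redefining on $B$ if $B\in I$, building $S(a)$ and an ultrafilter $D\ni B$ if $B\notin I$), and the general idea of a recursion of length $<|A|^+$ producing candidate upper bounds all match the paper. But there are serious gaps in the middle, and one of them is a genuine misstatement of what not-Ugly buys you.

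First, you claim that not-Ugly alone lets you construct an exact upper bound. This is false; the correct statement, and the one the paper proves, is that not-Ugly yields \emph{either} a least upper bound \emph{or} a witness to Bad. The Bad alternative does not only arise after the eub is built—it can arise \emph{during} the limit stages of the recursion, and when it does, no least upper bound (and hence no eub) is produced at all. Your proposal has no escape hatch at limit stages: you simply ``take pointwise infima,'' but the pointwise infimum $g_\gamma(a)=\min_{\alpha<\gamma}g_\alpha(a)$ of $\gamma$-many $I$-upper bounds need not be an $I$-upper bound (the exceptional set is a union of $\gamma$-many $I$-null sets, and $I$ is not assumed $\gamma^+$-complete). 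Stabilization does not repair this: the trace sequence of $g_\gamma$ is guaranteed to stabilize, but it may stabilize at an $I$-positive set, which is precisely the case the paper converts into a Bad witness and uses to terminate. The paper's limit step is genuinely different: it forms $S^\gamma(a)=\{g_\beta(a):\beta<\gamma\}$, sets $h^\gamma_\xi(a)=\min(S^\gamma(a)\setminus f_\xi(a))$, analyzes the trace sets $t^\eta_\xi=\{a:h^\gamma_\xi(a)<f_\eta(a)\}$, and either finds $\xi(\gamma)$ with $g_\gamma:=h^\gamma_{\xi(\gamma)}$ an upper bound, or builds a Bad ultrafilter from the stabilized $I$-positive traces. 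Your proposal needs this dichotomy, and cannot avoid it.

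Second, at successor stages you pass from ``$g_i$ is an upper bound but not exact'' to a strictly smaller upper bound $g_{i+1}$. But non-exactness of a \emph{least} upper bound does not supply a strictly smaller upper bound—there is none. What you actually want is: if $g_i$ is not a \emph{least} upper bound, take $g_{i+1}$ to be the pointwise minimum of $g_i$ and a witness $g'$; and separately invoke the lemma (the paper's \cref{lemma: lub to eub}) that under not-Ugly, any least upper bound is already exact. That lemma is doing real work and cannot be skipped.

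Third, the termination claim—``any strictly $<_I$-descending chain of upper bounds must terminate in fewer than $|A|^+$ steps''—is not a standalone fact; a $\le_I$-decreasing, pairwise $\ne_I$ chain of upper bounds of length $|A|^+$ is not automatically contradictory. The paper's termination argument is specific to its construction: at limit stages $g_\gamma =_I h^\gamma_{\xi(\gamma)}$, one uniformizes the indices to a single $\xi(*)<\lambda$ (this is where $\lambda>|A|^+$ enters), replaces $g_\alpha$ by $H_\alpha=h^\alpha_{\xi(*)}$ which are \emph{pointwise} decreasing because the $S^\alpha(a)$ increase, and then a pigeonhole on the $I$-positive difference sets $B_\alpha=\{a:H_{\alpha'}(a)<H_\alpha(a)\}$ yields an infinite descending sequence of ordinals. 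Without the $h$-machinery you do not have pointwise monotonicity, and the pigeonhole has nothing to bite on. So the termination argument, as stated, is a gap.
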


In our desired applications, the functions $\langle f_\xi : \xi<\lambda\rangle$ will belong to $\prod A$, where $A$ is a collection of regular cardinals. So if $\vec f$ does have an exact upper bound $f$, it would be bounded above by the function $a\mapsto a$. This means that if $f$ is Good as above, the requirement that $\cf(f(a)) > |A|$ for each $a\in A$ will force that $|A|<\min(A)$. So this version of trichotomy will not work in the more general setting of \cite{Sh506}. While Shelah pursues a different route, it is natural to ask whether or not one can generalize the trichotomy theorem. Of course, even if we obtain this more general trichotomy theorem, we still have to show that we can find sequences that are neither bad nor ugly. Our main theorem is that one can do precisely that.

This paper is organized as follows: In Section 2, we extend \cref{thm:Trichotomy}, and show that one can still construct sequences that are neither bad nor ugly. In Section 3, we use this to provide a streamlined proof of the fact that generators exist for $\pcf_I(A)$. Finally, we show that the no holes conclusion must fail in general, and that the standard techniques for obtaining transitive generators cannot be generalized.

\section{The Trichotomy Theorem}

Our goal in this section is to generalize \cref{thm:Trichotomy} by replacing the assumption that $|A|^+<\lambda$ with assumptions about the ideal $I$ we are asking about. First, we fix some notation

\begin{definition}
Suppose that $I$ is an ideal over a set $A$ of ordinals. Then

\begin{enumerate}
\item We denote the dual filter by $I^*$.
\item We say that property $P$ holds for $I$-almost every $\alpha\in A$ if the set of $\alpha\in A$ such that $P$ holds is in the dual filter $I^*$.
\item If $f, g$ are functions from $A$ to the ordinals, and $R$ is a relation on the ordinals, then we say $f R_I g$ if and only if $\{\alpha\in A : \neg(f(\alpha) R g(\alpha))\}\in I$.
\item Dually, if $D$ is a filter on $A$, $f, g$ are functions from $A$ to the ordinals, and $R$ is a relation on the ordinals, then we say $f R_D g$ if and only if $\{\alpha\in A : f(\alpha) R g(\alpha)\}\in D$.
\item We say a set $B\subseteq A$ is $I$-positive if $B\notin I$. We denote the collection of $I$-positive sets by $I^+$.
\end{enumerate}
\end{definition}

We now isolate and discuss several properties of ideals that we will be working with.

\begin{definition}
Suppose that $I$ is an ideal on some set $A$. For a cardinal $\theta$, we say that $I$ is weakly $\theta$-saturated if there is no partition of $A$ into $\theta$-many $I$-positive sets.
\end{definition}

Note that if $I$ is weakly $\theta$-saturated, and $\theta_1$ is a cardinal above $\theta$, then $I$ is also weakly $\theta_1$-saturated. Further, note that $I$ is always weakly $|A|^+$-saturated for trivial reasons.

\begin{definition}
If $I$ is an ideal on a set $A$, then let $\wsat(I)$ denote the least cardinal $\theta$ such that $I$ is weakly $\theta$-saturated.
\end{definition}

Another property that we will need indirectly is a weakening of $\theta$-completeness.

\begin{definition}
Suppose that $I$ is an ideal on some set $A$. For a regular cardinal $\theta$, we say that $I$ is $\theta$-indecomposable if $I$ is closed under $\subseteq$-increasing unions of length $\theta$.
\end{definition}

One thing to note above is that, unlike weak saturation, indecomposability is neither upwards nor downwards hereditary. While we will be making use of weak saturation directly in the next section, our use of indecomposability comes by way of combining it with weak saturation. In particular, we will make frequent use of the following result.

\begin{lemma}[Proposition 2.6 of \cite{Eis3}]\label{lem: reg}
Let $I$ be an ideal on a set $A$. The following are equivalent for a regular cardinal $\theta$.

\begin{enumerate}
\item $I$ is weakly $\theta$-saturated and $\theta$-indecomposable.
\item Whenever $\langle B_i : i<\theta\rangle$ is a $\subseteq$-increasing $\theta$-sequence of subsets of $A$, then there is some $j^*<\theta$ such that

\begin{equation*}
j^*\leq j<\theta\quad\implies\quad B_j=_I \bigcup_{i<\theta}B_i.
\end{equation*}
\item Whenever $\langle A_i : i<\theta\rangle$ is a sequence of $I$-positive sets, there is some $H\in[\theta]^\theta$ such that 

\begin{equation*}
\bigcap_{i\in H}A_i\neq\emptyset.
\end{equation*}
\end{enumerate}
\end{lemma}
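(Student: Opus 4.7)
The plan is to prove the equivalence by establishing $(2) \Rightarrow (1)$ and $(2) \Leftrightarrow (3)$ as warmups, and then tackling the central direction $(1) \Rightarrow (2)$.

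The direction $(2) \Rightarrow (1)$ is routine. For weak $\theta$-saturation, given a hypothetical partition $A = \bigsqcup_{i<\theta} P_i$ into $I$-positive pieces, I would apply $(2)$ to the increasing union $B_j = \bigsqcup_{i\le j} P_i$ and observe that the resulting stabilizing index $j^*$ forces $P_{j^*+1} \in I$. For $\theta$-indecomposability, $(2)$ applied to a $\subseteq$-increasing sequence of sets in $I$ immediately gives $\bigcup_i B_i =_I B_{j^*} \in I$. For $(2) \Leftrightarrow (3)$, I would exploit the duality between increasing unions and decreasing intersections: assuming (2) fails for some $\langle B_i\rangle$, the decreasing sequence $C_i = B \setminus B_i$ consists of $I$-positive sets with empty intersection, so any $H \in [\theta]^\theta$ gives $\bigcap_{i \in H} C_i = \bigcap_{i<\theta} C_i = \emptyset$, contradicting (3). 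Conversely, if (3) fails for some $I$-positive $\langle A_i\rangle$, then the hit sets $H_a = \{i : a \in A_i\}$ are bounded by regularity of $\theta$, so $g(a) = \sup H_a$ maps into $\theta$; applying (2) to the increasing sequence $\{a : g(a) < j\}$ eventually forces some $A_j$ into $I$.

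The central direction is $(1) \Rightarrow (2)$. Given $\langle B_i : i<\theta\rangle$ increasing with union $B$, I would form the level-set partition $E_i = B_i \setminus \bigcup_{j<i} B_j$ of $B$. Weak $\theta$-saturation prevents $\theta$-many $E_i$ from being $I$-positive (else, rearranging into a partition of $A$ yields a direct contradiction), so there is some $\beta < \theta$ with $E_i \in I$ for all $i \geq \beta$. The remaining task reduces to showing that $B \setminus B_\beta = \bigsqcup_{i > \beta} E_i$ lies in $I$. My plan is to consider the partial sums $U_\delta = \bigsqcup_{\beta < i \le \delta} E_i$ for $\delta < \theta$; once each $U_\delta \in I$ is verified by transfinite induction on $\delta$, $\theta$-indecomposability applied to the increasing sequence $\langle U_\delta\rangle$ closes out the argument.

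The hard part will be the limit stages of this induction. At a limit $\delta < \theta$, the union $\bigcup_{\delta' < \delta} U_{\delta'}$ has length $\cf(\delta) < \theta$, and $\theta$-indecomposability does not by itself close $I$ under such shorter unions. The resolution, which is the real technical content of Eisworth's proposition, is to exploit the interplay between weak $\theta$-saturation and the level-set structure by iterating the partitioning argument at each potential limit obstruction: any $I$-positive limit-stage union would either rearrange into a partition of $A$ into $\theta$-many $I$-positive pieces (violating weak $\theta$-saturation) or lift to a $\theta$-length increasing union of $I$-sets (violating $\theta$-indecomposability). This careful bookkeeping at limits of cofinality $<\theta$ is the one place where the two halves of (1) genuinely need to be used together.
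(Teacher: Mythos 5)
Your treatment of $(2)\Rightarrow(1)$ and $(2)\Leftrightarrow(3)$ is sound. The problem is the central direction $(1)\Rightarrow(2)$, where what you offer at the limit stages is not a proof but a description of a difficulty together with an unsubstantiated claim that it can be resolved. You correctly observe that at a limit $\delta$ of cofinality below $\theta$, the set $U_\delta = \bigcup_{\delta'<\delta}U_{\delta'}$ is a short union of $I$-sets and neither hypothesis of $(1)$ applies directly. But the proposed ``resolution'' --- that an $I$-positive $U_\delta$ would ``either rearrange into a partition of $A$ into $\theta$-many $I$-positive pieces or lift to a $\theta$-length increasing union of $I$-sets'' --- does not hold up under scrutiny: $U_\delta$ is partitioned by only $|\delta|<\theta$ level sets (so no $\theta$-partition), and it is the union of an increasing chain of length $\delta<\theta$ (so $\theta$-indecomposability is silent). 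The induction on partial sums $U_\delta$ is not a framework that can be patched at limits; it has to be abandoned.

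The workable argument avoids any induction on the $U_\delta$'s and instead makes the two halves of $(1)$ cooperate in a single recursion of length $\theta$. Suppose for contradiction that $B\setminus B_i\notin I$ for every $i<\theta$ (the failure of $(2)$ forces this for all $i$, since the sets $B\setminus B_i$ are decreasing). Build a strictly increasing sequence $\langle\gamma_\xi:\xi<\theta\rangle$ below $\theta$ as follows: $\gamma_0=0$, take suprema at limits (which stay below $\theta$ by regularity), and at a successor step, given $\gamma_\xi$, consider the $\subseteq$-increasing sequence $\langle B_\delta\setminus B_{\gamma_\xi}:\gamma_\xi<\delta<\theta\rangle$. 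Its index set has order type $\theta$ and its union is $B\setminus B_{\gamma_\xi}\notin I$, so the contrapositive of $\theta$-indecomposability hands you some $\delta>\gamma_\xi$ with $B_\delta\setminus B_{\gamma_\xi}\notin I$; set $\gamma_{\xi+1}=\delta$. The resulting sets $B_{\gamma_{\xi+1}}\setminus B_{\gamma_\xi}$, for $\xi<\theta$, are pairwise disjoint and $I$-positive, and absorbing the leftover $A\setminus\bigcup_{\xi}(B_{\gamma_{\xi+1}}\setminus B_{\gamma_\xi})$ into one of them yields a partition of $A$ into $\theta$-many $I$-positive pieces --- contradicting weak $\theta$-saturation. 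Notice that this argument never certifies that any particular proper partial union lies in $I$; it goes straight from the hypothesis to a $\theta$-sized family of disjoint positive sets, which is exactly the shape of object that weak $\theta$-saturation can kill. Your own level-set decomposition and the initial application of weak $\theta$-saturation to locate $\beta$ are fine and can be retained as motivation, but they are not needed once the recursion above is in place. The paper itself only cites Proposition 2.6 of Eisworth rather than proving it, so there is no in-text argument to compare against; on its own terms, your sketch leaves the crucial direction unproved.
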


At this point we can isolate one of the properties needed to push the generalized trichotomy theorem through.

\begin{definition}
Let $I$ be an ideal on a set $A$. For a regular cardinal $\theta$, we say that $I$ is $\theta$-regular if it satisfies one of the equivalent conditions in \cref{lem: reg}.
\end{definition}

Note that $I$ will automatically be $|A|^+$-regular. To see this, fix a sequence $\langle A_i : i<|A|^+\rangle$ of non-empty subsets of $A$. Then define a function $f:|A|^+\to A$ by setting $f(i)$ to be the least $a\in A$ such that $a\in A_i$. Then there must be some $H\subseteq |A|^+$ of cardinality $|A|^+$ and $a\in A$ such that $f(i)=a$ for every $i\in H$. In particular, if $I$ is an ideal on a set of ordinals $A$ and $|A|<\min(A)$, then $I$ will be $\theta$-regular for every $\theta\in (|A|, \min A]\cap \reg$.

\begin{definition}
Suppose that $I$ is an ideal on a set $A$. Let $\reg(I)$ denote the least regular $\theta$ such that $I$ is $\theta$-regular.
\end{definition}

For a set $A$ of ordinals, an ideal $I$ on $A$, and a regular cardinal $\theta$, recall that we are concerned with the following properties:

\begin{definition}
Let $F$ be a collection of functions from $A$ to $\mathrm{ON}$. We say that $f:A\to ON$ is an $I$-{\bf upper bound} for $F$ if $g\leq_I f$ for every $g\in F$. We say that $f$ is an $I$-{\bf least upper bound} for $F$ if additionally $f\leq_I f'$ for every upper bound $f'$ of $F$. Finally, $f$ is an $I$-{\bf exact upper bound} for $F$ if $f$ is a least upper bound of $F$, and $F$ is $<_I$-cofinal in $\{g\in{}^A\mathrm{ON}: g<_I f\}$. If the ideal $I$ is clear from the context, then it may be omitted.
\end{definition}

\begin{definition}

Let $\vec f=\langle f_\alpha : \alpha<\lambda\rangle$ be an $<_I$-increasing sequence of functions from $A$ to $\mathrm{ON}$. For a cardinal $\theta$, we define the following properties of $\vec f$:
\begin{enumerate}
\item \underline{$\mathrm{Good}_\theta$}: $\vec{f}$ has an exact upper bound $f\in {}^A\mathrm{ON}$ with $\{a\in A : \cf(f(a))<\theta\}\in I$.\\
\item \underline{$\mathrm{Bad}_\theta$}: There are sets $S(a)$ for each $a\in A$ such that $|S(a)|<\theta$ and an ultrafilter $D$ over $A$ disjoint from $I$ such that, for all $\xi<\lambda$, there exists some $h_\xi\in \prod_{a\in A}S(a)$ and some $\eta<\lambda$ such that $f_\xi<_D h_\xi<_D f_\eta$.\\
\item \underline{Ugly}: There is a function $g: A\to\mathrm{ON}$ such that, letting $t_\xi=\{a\in A : f_\xi(a)>g(a)\}$, the sequence $\vec{t}=\langle t_\xi : \xi<\lambda\rangle$ (which is $\subseteq_I$-increasing) does not stabilize modulo $I$. That is, for every $\xi<\lambda$, there is some $\xi<\eta<\lambda$ such that $t_\eta\setminus t_\xi\notin I$.\\
\end{enumerate}
\end{definition}

We begin by noting that the following two lemmas do not require any hypotheses on $I$ or $A$. The first of the two lemmas appears as Claim A.2 of \cite{Koj}

\begin{lemma}\label{lemma: lub to eub}
Suppose that $A$ is a set of ordinals, and $I$ is an ideal on $A$. Let $\lambda$ be regular and $\vec{f}=\langle f_\xi : \xi<\lambda\rangle$ be $<_I$-increasing. If $\vec{f}$ is not Ugly, then every least upper bound of $\vec{f}$ is an exact upper bound.
\end{lemma}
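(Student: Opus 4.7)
My plan is to argue by contrapositive: assuming $f$ is a least upper bound of $\vec{f}$ which fails to be an exact upper bound, I will produce a function witnessing that $\vec{f}$ is Ugly. Since $f$ is not an exact upper bound, there is some $g\colon A \to \mathrm{ON}$ with $g <_I f$ such that no $f_\xi$ satisfies $g <_I f_\xi$. After modifying $g$ on a set in $I$, I can assume $g(a) < f(a)$ pointwise. This $g$ is my candidate witness, so I set $t_\xi = \{a\in A : f_\xi(a) > g(a)\}$ as in the Ugly condition and note that the hypothesis $g \not<_I f_\xi$ translates to $A \setminus t_\xi \notin I$ for every $\xi<\lambda$.

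The key claim is that $\vec{t}$ does not stabilize modulo $I$, and I will prove it by contradiction. If some $\xi_0 < \lambda$ satisfied $t_\eta \setminus t_{\xi_0} \in I$ for all $\eta \geq \xi_0$, I would define
\[
f'(a) = \begin{cases} f(a) & \text{if } a \in t_{\xi_0},\\ g(a) & \text{if } a \notin t_{\xi_0}. \end{cases}
\]
The verification that $f'$ is an $I$-upper bound for $\vec{f}$ is routine: for $\xi \geq \xi_0$, the set $\{a : f_\xi(a) > f'(a)\}$ splits into its intersection with $t_{\xi_0}$, which sits inside $\{a : f_\xi(a) > f(a)\}\in I$ since $f$ is an upper bound, and its complement, which equals $t_\xi \setminus t_{\xi_0}\in I$ by the assumed stabilization; for $\xi<\xi_0$ one uses $f_\xi \leq_I f_{\xi_0}$. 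On the other hand, $f' \leq f$ pointwise with strict inequality on the $I$-positive set $A \setminus t_{\xi_0}$, so $f \not\leq_I f'$, contradicting that $f$ is a least upper bound. Hence $\vec{t}$ fails to stabilize, so $\vec{f}$ is Ugly with witness $g$, completing the contrapositive.

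The argument uses no structural hypothesis on $I$ or $A$, which matches the statement of the lemma and explains why this half of trichotomy does not need to be generalized for Section 2. The only nontrivial step is the choice of $f'$ and the split of $\{a : f_\xi(a) > f'(a)\}$ along $t_{\xi_0}$; once that is in hand, the contradiction reduces to recognizing $A \setminus t_{\xi_0}$ as an $I$-positive set on which $f' < f$, which is exactly what the non-dominance $g \not<_I f_{\xi_0}$ supplies. I therefore do not expect any real obstacle beyond this bookkeeping.
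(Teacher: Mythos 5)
Your argument is correct and complete. The paper states this lemma without proof, citing Claim A.2 of Kojman's pcf survey; your contrapositive argument --- taking a witness $g$ to the failure of cofinality, splicing $f$ with $g$ along a putative stabilization point $t_{\xi_0}$ to manufacture a strictly smaller upper bound $f'$, and deriving a contradiction with minimality from the $I$-positivity of $A\setminus t_{\xi_0}$ --- is the standard one for this fact and requires no hypothesis on $I$ or $A$, exactly as the lemma advertises.
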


The next lemma appears in the middle of the proof of Theorem 2.15 of \cite{AbMag}, but we prove it for the sake of completeness.

\begin{lemma}\label{lemma: bad eub}
Suppose that $A$ is a set of ordinals, and $I$ is an ideal on $A$. Further, suppose that $\lambda$ and $\theta\leq\lambda$ are regular, and that $\vec{f}=\langle f_\xi : \xi<\lambda\rangle$ is a $<_I$-increasing sequence of functions from $A$ to $\mathrm{ON}$. If $\vec{f}$ has an exact upper bound $f$ such that $\{a\in A : cf(f(a))< \theta\}\notin I$ then $\vec{f}$ satisfies $\mathrm{Bad}_\theta$.
\end{lemma}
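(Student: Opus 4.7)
The plan is to produce witnesses to $\mathrm{Bad}_\theta$ directly from the cofinality structure of $f$ on the $I$-positive set $B=\{a\in A:\mathrm{cf}(f(a))<\theta\}$. First I would extend the filter generated by $I^*\cup\{B\}$ to an ultrafilter $D$ on $A$. This filter is proper: if some $Y\in I^*$ had $Y\cap B=\emptyset$, then $B\subseteq A\setminus Y\in I$, contradicting $B\notin I$. The resulting ultrafilter $D$ contains $I^*$, hence is disjoint from $I$, and contains $B$. Without loss of generality I may also shrink $B$ so that $f(a)>0$ for every $a\in B$, because $\{a\in A:f(a)=0\}\subseteq \{a:f_0(a)\ge f(a)\}\in I$, and removing an $I$-small set from $B$ leaves it $I$-positive and $D$-large.

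Next I would choose the sets $S(a)$. For $a\in B$, let $S(a)\subseteq f(a)$ be cofinal in $f(a)$ of cardinality $\mathrm{cf}(f(a))<\theta$; for $a\notin B$, set $S(a)=\{0\}$. Given $\xi<\lambda$, define $h_\xi\in\prod_{a\in A}S(a)$ as follows. For $a\in B$ with $f_\xi(a)<f(a)$, use the cofinality of $S(a)$ in $f(a)$ to pick $h_\xi(a)\in S(a)$ with $h_\xi(a)>f_\xi(a)$; for the remaining $a\in B$, take any element of $S(a)$; for $a\notin B$, let $h_\xi(a)=0$. By construction, $h_\xi(a)<f(a)$ for every $a\in A$ (using that $f(a)\ge\theta$ on $A\setminus B$), so in particular $h_\xi<_I f$.

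It remains to produce $\eta<\lambda$ with $f_\xi<_D h_\xi<_D f_\eta$. Since $f$ is an \emph{exact} upper bound of $\vec f$, the set $\{g\in{}^A\mathrm{ON}:g<_If\}$ is $<_I$-cofinal in $\vec f$, so there exists $\eta<\lambda$ with $h_\xi<_I f_\eta$; because $D\supseteq I^*$, this gives $h_\xi<_D f_\eta$. For the other inequality, the set $C:=B\cap\{a\in A:f_\xi(a)<f(a)\}$ lies in $D$ (as $B\in D$ and $f_\xi<_I f$ implies $\{a:f_\xi(a)<f(a)\}\in I^*\subseteq D$), and on $C$ we arranged $h_\xi(a)>f_\xi(a)$; hence $f_\xi<_D h_\xi$. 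This verifies $\mathrm{Bad}_\theta$.

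The argument is essentially a bookkeeping exercise, so I do not anticipate a serious obstacle; the only subtle points are ensuring that the filter generated by $I^*\cup\{B\}$ is proper and that $h_\xi$ can be kept below $f$ \emph{pointwise} (not merely on $B$), so that the exact upper bound property actually applies — both handled by the preparatory reductions above.
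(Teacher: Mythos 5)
Your proof follows the same route as the paper's: extend $I^*\cup\{B\}$ to an ultrafilter $D$, choose $S(a)$ cofinal in $f(a)$ of size $\cf(f(a))<\theta$ for $a\in B$, define $h_\xi$ to land in $\prod_a S(a)$ while staying below $f$, and invoke exactness to find $\eta$ with $h_\xi<_D f_\eta$. The filter-properness check, the reduction to $f(a)>0$ on $B$, and the pointwise bound $h_\xi<f$ are all fine.

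There is one step that does not quite go through as written. For $a\in B$ with $f_\xi(a)<f(a)$, you claim to pick $h_\xi(a)\in S(a)$ with $h_\xi(a)>f_\xi(a)$ ``by cofinality.'' This works when $f(a)$ is a limit ordinal, but if $f(a)=\gamma+1$ then any cofinal $S(a)\subseteq f(a)$ has maximum $\gamma$, and when $f_\xi(a)=\gamma$ there is no element of $S(a)$ strictly above $f_\xi(a)$. The paper sidesteps this by setting $h_\xi(a):=\min\bigl(S(a)\setminus f_{\xi+1}(a)\bigr)$, so that $h_\xi\geq f_{\xi+1}>_I f_\xi$ yields $f_\xi<_D h_\xi$ without ever needing a strict pointwise step inside $S(a)$. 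Your version is also easily repaired: the exceptional set $\{a\in B: f(a)\text{ is a successor and } f_\xi(a)=f(a)-1\}$ is contained in $\{a:f_\xi(a)+1\geq f(a)\}$, which lies in $I$ because $f_\xi<_I f_{\xi+1}<_I f$ gives $f_\xi(a)+1\leq f_{\xi+1}(a)<f(a)$ on an $I^*$-set; hence it is not in $D$, and after discarding it you still obtain $f_\xi<_D h_\xi$. With that one patch the argument is correct and essentially identical to the paper's.
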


\begin{proof}
Let $f$ be an upper bound for $\vec{f}$ with $B=\{a\in A : cf(f(a))< \theta\}\notin I$, and let $D$ be an ultrafilter over $A$ disjoint from $I$ such that $B\in D$. Next for each $a\in B$, let $S(a)$ be cofinal in $f(a)$ with $|S(a)|=cf(f(a))<\theta$, and let $S(a)=\{0\}$ for each $a\notin B$. For each $\xi<\lambda$, let $f_\xi^+$ be defined by $f_\xi^+(a)=\min(S(a)\setminus f_\xi(a))$ for $i\in A$ and $f_\xi^+(a)=0$ otherwise.

Now for any $\xi<\lambda$ we have that $f_\xi<_D f_{\xi+1}^+$ where $f_{\xi+1}^+\in\prod_{a\in A}S(a)$. On the other hand, $f$ is exact and since $S(a)$ is cofinal in $f$ $D$-almost everywhere, it follows that there is some $\eta\in \lambda$ such that $f_{\xi+1}^+<_D f_\eta$ and so $\vec{f}$ is bad as witnessed by $\langle S(a) : a\in A \rangle$ and $D$.
\end{proof}

With these two lemmas in hand, we move to the statement and proof of the trichotomy theorem.

\begin{theorem}[Trichotomy]\label{thm: trichotomy}
Suppose that $A$ is a set of ordinals, and $I$ is an ideal on $A$. Let $\lambda>\reg(I)$ and $\theta\in[\reg(I),\lambda]$ be regular. If $\vec{f}=\langle f_\xi : \xi <\lambda\rangle$ is a $<_I$-increasing sequence of functions from $A$ to $\mathrm{ON}$, then at least one of $\mathrm{Good}_\theta$, $\mathrm{Bad}_\theta$, or $\mathrm{Ugly}$ must hold.
\end{theorem}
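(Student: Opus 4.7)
The plan is to mimic Shelah's original proof of \cref{thm:Trichotomy}, but with $\reg(I)$ playing the role of $|A|^+$ throughout. Assuming $\vec f$ is not Ugly, the task reduces by \cref{lemma: lub to eub} and \cref{lemma: bad eub} to showing $\vec f$ has an exact upper bound: any lub is then automatically an eub, and the dichotomy on $\{a \in A : \cf(f(a)) < \theta\}$ yields either Good$_\theta$ (when this set is in $I$) or Bad$_\theta$ (by \cref{lemma: bad eub} when it is $I$-positive).

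To produce an lub, I would argue by contradiction. Assume $\vec f$ has no lub; then every upper bound $g$ admits an upper bound $g' <_I g$. Starting from any upper bound $g_0$ and iterating, I would build a $<_I$-decreasing chain $\langle g_\alpha : \alpha < \reg(I)\rangle$ of upper bounds of $\vec f$. At each successor stage, I replace $g_{\alpha+1}$ by the pointwise $\min(g_{\alpha+1}, g_\alpha)$ so the chain is pointwise $\leq$-decreasing; this preserves upper-boundedness because the set where $f_\xi$ exceeds $\min(g_{\alpha+1}, g_\alpha)$ is the union of the two individual exception sets, both in $I$. At a limit $\alpha$, I would set $g_\alpha(a) := \inf_{\beta < \alpha} g_\beta(a)$, with upper-boundedness preserved by $\reg(I)$-indecomposability applied to the $\subseteq$-increasing union $\bigcup_{\beta < \alpha} \{a \in A : f_\xi(a) > g_\beta(a)\}$ when $\cf(\alpha) = \reg(I)$.

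Once such a pointwise $\leq$-decreasing, $<_I$-decreasing chain of length $\reg(I)$ is in hand, the contradiction comes from weak $\reg(I)$-saturation. For each $a \in A$ the weakly decreasing ordinal sequence $\langle g_\alpha(a) : \alpha < \reg(I)\rangle$ must stabilize at some $\gamma_a < \reg(I)$. The $\subseteq$-increasing sequence $B_\alpha := \{a \in A : \gamma_a \leq \alpha\}$ has $\bigcup_{\alpha < \reg(I)} B_\alpha = A$, so by characterization (2) of \cref{lem: reg} there is some $\alpha^* < \reg(I)$ with $B_\alpha =_I A$ for all $\alpha \geq \alpha^*$. This forces $g_{\alpha^*+1} =_I g_{\alpha^*}$, contradicting the strict decrease $g_{\alpha^*+1} <_I g_{\alpha^*}$.

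The main obstacle I anticipate is controlling the chain construction at limit stages of cofinality strictly less than $\reg(I)$, where $\reg(I)$-indecomposability does not directly place the increasing union $\bigcup_{\beta < \alpha} \{a \in A : f_\xi(a) > g_\beta(a)\}$ in $I$. Overcoming this likely requires either arranging the indexing so that the pointwise infimum is taken only at limits of cofinality $\reg(I)$, or exploiting the not-Ugly hypothesis together with the stabilization of $\subseteq_I$-increasing sequences to extract a Bad$_\theta$ witness directly from a "stuck" limit stage (with $S(a)$ derived from the values $\{g_\beta(a) : \beta < \alpha\}$ and $D$ chosen to contain the escape set that prevents the limit from yielding an upper bound).
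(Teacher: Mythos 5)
Your high-level structure matches the paper's: reduce via \cref{lemma: lub to eub} and \cref{lemma: bad eub} to producing a least upper bound (or a Bad$_\theta$ witness) under the not-Ugly assumption, build a transfinite chain of candidate upper bounds of length $\reg(I)$, and derive a contradiction from $\reg(I)$-regularity if the chain never terminates. Your successor step (pointwise $\min$) is fine, and your final contradiction argument via characterization (2) of \cref{lem: reg} is clean and correct --- arguably tidier than the paper's, which uses characterization (3) and a strictly-decreasing-ordinals contradiction instead.

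The genuine gap is exactly the one you flag: the limit stage. Every limit $\alpha<\reg(I)$ has $\cf(\alpha)<\reg(I)$ (since $\reg(I)$ is regular), so $\reg(I)$-indecomposability never applies directly to the union $\bigcup_{\beta<\alpha}\{a : f_\xi(a)>g_\beta(a)\}$ --- the case ``$\cf(\alpha)=\reg(I)$'' simply does not occur below $\reg(I)$, and without $\cf(\alpha)$-indecomposability the pointwise infimum need not be an $I$-upper bound. Your fix (1) cannot work for the same reason. Your fix (2) is the right instinct, but the crucial move is \emph{not} to take the pointwise infimum at all. The paper instead sets $S^\gamma(a)=\{g_\beta(a):\beta<\gamma\}$ and, for each $\xi$, takes $h^\gamma_\xi(a)=\min\bigl(S^\gamma(a)\setminus f_\xi(a)\bigr)$: the least of the old candidate values that still sits above $f_\xi(a)$. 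This is well defined because $g_0$ is a pointwise bound, and it is larger than the pointwise infimum in general. One then applies not-Ugly \emph{to each fixed $h^\gamma_\xi$} (in the role of $g$) to conclude that the $\subseteq_I$-increasing sequence $t^\eta_\xi=\{a: h^\gamma_\xi(a)<f_\eta(a)\}$ stabilizes mod $I$ at some $\eta(\xi)$. If the stable values $t^{\eta(\xi)}_\xi$ are all $I$-positive, the $\subseteq_I$-decreasing family $\{t^{\eta(\xi)}_\xi\}$ generates an ultrafilter witnessing Bad$_\theta$ (with the $S^\gamma(a)$ as the small sets); otherwise some $h^\gamma_{\xi(\gamma)}$ is already an $I$-upper bound and serves as $g_\gamma$. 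Without this mechanism your construction cannot pass limit ordinals, and the contradiction at the end --- while valid in itself --- rests on a chain you have not shown exists.
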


One thing to note is that the classical Trichotomy Theorem requires that $\lambda>|A|^+$, whereas we simply require that $\lambda>\reg(I)$. By work of Kojman and Shelah in \cite{ShKoj}, the requirement that $\lambda>|A|^+$ cannot be weakened to $\lambda>|A|$. 

\begin{proof}
We will show that, assuming $\vec{f}$ is not Ugly, then we can either find a witness to $\mathrm{Bad}_\theta$ or find a least upper bound $f$ for $\vec{f}$. By \cref{lemma: lub to eub}, this least upper bound is actually exact and so by \cref{lemma: bad eub} we can either find a witness to $\mathrm{Bad}_\theta$ or $f$ witnesses $\mathrm{Good}_\theta$. We proceed by induction on $\alpha<\reg(I)$, and at each stage create a candidate for a least upper bound. We will terminate at successor stages if we have found a least upper bound, and at limit stages if we can construct a witness to $\mathrm{Bad}_\theta$. At the end, we will show that we must have terminated at some $\alpha<\reg(I)$, else we will be able to derive a contradiction. At each stage $\alpha<\reg(I)$, we will define:

\begin{enumerate}
\item Functions $g_\alpha:A\to\mathrm{ON}$ which are $I$-upper bounds for $\vec{f}$ such that, for each $\beta<\alpha$, we have $g_\alpha\leq_I g_\beta$ but $g_\alpha\neq_I g_\beta$.
\item Sets $S^\alpha(a)=\{g_\beta(a) : \beta<\alpha\}$.
\item Functions $h^\alpha_\xi: A\to\mathrm{ON}$ for $\xi<\lambda$ defined by $h^\alpha_\xi(a)=\min(S^\alpha(a)\setminus f_\xi(a))$.\\
\end{enumerate}
Note here that 2) and 3) depend on how we define 1). Further, the sequence $\langle h_\xi^\alpha : \xi<\lambda\rangle$ is $\leq_I$-increasing in $\prod_{a\in A} S^\alpha(a)$.\\

\underline{Stage $\alpha=0$}: Here we let $g_0$ be any $\leq$-upper bound of $\vec{f}$, for example $g(a)=\sup\{f_\xi(a) : \xi<\lambda\}+1$ works. Requiring that $g_0$ dominates $\vec{f}$ everywhere ensures that the functions $h^\alpha_\xi$ are defined everywhere.\\

\underline{Stage $\alpha+1$}: Assume that $g_\alpha$ has been define. If $g_\alpha$ is a $I$-least upper bound for $\vec{f}$, then we can terminate the induction. Otherwise $g_\alpha$ is not a least upper bound, so there is some $I$-upper bound $g_{\alpha+1}$ such that $g_{\alpha+1}\leq_I g_\alpha$ but $g_{\alpha+1}\neq_I g_\alpha$.\\

\underline{Stage $\gamma$ limit}: Suppose that $\gamma<\reg(I)$ is a limit ordinal and that $g_\alpha$ has been defined for each $\alpha<\gamma$. Now consider the functions $h^\gamma_\xi$ and the sets
\begin{equation*}
t^\eta_\xi:=\{a\in A : h^\gamma_\xi(a) <f_\eta(a)\}
\end{equation*}
for $\eta, \xi<\lambda$. Fixing the $\xi$ coordinate, the function $h^\gamma_\xi$ is fixed while we run through $\langle f_\eta : \eta<\lambda\rangle$ and so the sequence $\vec{t}_\xi=\langle t^\eta_\xi : \eta<\lambda\rangle$ is $\subseteq_I$-increasing since the sequence $\vec{f}$ is $<_I$-increasing. Fixing the $\eta$ coordinate on the other hand, we fix $f_\eta$ and run through $\langle h^\gamma_\xi : \xi<\lambda\rangle$ and so the sequence $\vec{t}^\eta=\langle t^\eta_\xi : \xi<\lambda\rangle$ is $\subseteq_I$-decreasing. Since $\vec{f}$ is not Ugly, it follows that each sequence $\vec{t}_\xi$ stabilizes modulo $I$ at some ordinal $\eta(\xi)$. That is, for all $\eta>\eta(\xi)$, we have that $t^{\eta(\xi)}_\xi=_I t^\eta_\xi$. 

We have two cases to consider: either $t^{\eta(\xi)}_\xi\notin I$ for each $\xi< \lambda$, or for all sufficiently large $\xi<\lambda$, we have $t^{\eta(\xi)}_\xi\in I$. To see that these are indeed all of our cases, first note that whenever $\xi<\xi'$,
\begin{equation*}
t^{\eta(\xi')}_{\xi'}=_I t^\eta_{\xi'}\subseteq _I t^\eta_\xi=_I t^{\eta(\xi)}_\xi
\end{equation*}
for $\eta>\max\{\eta(\xi),\eta(\xi')\}$ since $\langle t^\eta_\xi : \xi<\lambda\rangle$ is $\subseteq_I$-decreasing. So if there is some $\xi\in \lambda$ for which $t^{\eta(\xi)}_\xi\in I$, then it follows that $t^{\eta(\xi')}_{\xi'}\in I$ for each $\xi'>\xi$ since $t^{\eta(\xi)}_{\xi}\in I$ and $t^{\eta(\xi')}_{\xi'}\subseteq_I t^{\eta(\xi)}_{\xi}$. 

Assume that the former happens (i.e. $t^{\eta(\xi)}_\xi\notin I$ for each $\xi< \lambda$), and consider the sequence $\langle t^{\eta(\xi)}_\xi:\xi<\lambda\rangle$. Note that this sequence is $\subseteq_I$-decreasing, and so $I^*\cup \{t^{\eta(\xi)}_\xi:\xi<\lambda\}$ has the finite intersection property. So let $D$ be an ultrafilter over $A$ extending $I^*\cup \{t^{\eta(\xi)}_\xi:\xi<\lambda\}$, and note that $\mathrm{Bad}_\theta$ is witnessed by $D$ and $\langle S^\gamma(a) : a\in A\rangle$. By construction we know that $f_\xi<_D h^\gamma_{\xi+1}$ for each $\xi<\lambda$. On the other hand, we have that $h^\gamma_{\xi+1}<_D f_{\eta(\xi+1)}$ since $t^{\eta(\xi+1)}_{\xi+1}\in D$. If this happens, we can terminate the induction.

Otherwise, suppose that $t^{\eta(\xi)}_\xi\in I$ for each sufficiently large $\xi<\lambda$. Let $\xi(\gamma)$ be the least $\xi$ for which this occurs, and define $g_\gamma:=h^\gamma_{\xi(\gamma)}$. Note then that $g_\gamma$ is an $I$-upper bound of $\vec{f}$ by construction, and so we only need to verify that $g_\gamma\leq_I g_\alpha$ while $g_\gamma\neq_I g_\alpha$ for each $\alpha<\gamma$. Recall that $S^\gamma(a)=\{g_\alpha (a) : \alpha<\gamma\}$ while $g_\gamma=\min(S^\gamma(a)\setminus f_{\xi(\gamma)}(a))$ and $\langle g_\alpha : \alpha<\gamma\rangle$ is $\leq_I$-decreasing. If $\alpha<\gamma$, then $\{ a\in A : g_\gamma(a)> g_\alpha(a)\}\in I$ since $g_\alpha$ is an $I$-upper bound for $f$, and thus $\{a\in A : g_\alpha(a)\in S^\gamma(a)\setminus f_{\xi(\gamma)}(a)\}\in I^*$. Additionally, it follows that $g_\gamma\neq_I g_\alpha$. Otherwise, for $\alpha<\beta<\gamma$, we get that
\begin{equation*}
g_\alpha=_I g_\gamma\leq_I g_\beta\leq_I g_\alpha,
\end{equation*}
and so $g_\alpha=_I g_\beta$. This contradicts condition (1) of the induction, and so $g_\gamma\neq_I g_\alpha$. It is worth noting that, in this case $\langle h^\gamma_\xi: \xi<\lambda\rangle$ stabilizes modulo $I$ again by definition.\\

We claim that this induction must have terminated. Otherwise, for each $\alpha\in \acc(\reg(I))$, we have defined: 

\begin{enumerate}
\item Functions $g_\alpha:A\to\mathrm{ON}$ which are upper bounds for $\vec{f}$ such that, for each $\beta\in \acc(\reg(I))$ with $\beta<\alpha$, we have $g_\alpha\leq_I g_\beta$ but $g_\alpha\neq_I g_\beta$.
\item Ordinals $\xi(\alpha)$ such that $g_\alpha=h^\alpha_{\xi(\alpha)}=_I h^\alpha_\xi$ for each $\xi\geq\xi(\alpha)$.\\
\end{enumerate}
Since $\reg(I)<\lambda$ with $\lambda$ regular, we can see that $\xi(*)=\sup\{\xi(\alpha): \alpha\in \acc(\reg(I))\}$ is still below $\lambda$. Note that $g_\alpha=_I h^\alpha_{\xi(*)}$ for each $\alpha\in \acc(\reg(I))$, so letting $H_\alpha=h^\alpha_{\xi(*)}$ we have that $H_\alpha$ enjoys the same properties as $g_\alpha$. Now for each $\alpha\in \acc(\reg(I))$, let $\alpha'$ be the successor of $\alpha$ in $\acc(\reg(I))$, i.e.
\begin{equation*}
\alpha'=\min(\acc(\reg(I))\setminus \alpha+1)=\alpha+\omega.
\end{equation*}

Define the sets
\begin{equation*}
B_\alpha:=\{a\in A : H_{\alpha'}(a)<H_\alpha(a)\}
\end{equation*}  
for each $\alpha\in \acc(\reg(I))$. Now, for all $\alpha<\beta\in \acc(\reg(I))$, we have $S^\alpha(a)\subseteq S^\beta(a)$ and hence $H_\beta\leq H_\alpha$. On the other hand, by construction each $B_\alpha\notin I$, and so the sequence $\langle B_\alpha :\alpha <\acc(\reg(I))\rangle$ has the property that, for some $H\subseteq \acc(\reg(I))$ with $|H|=\reg(I)$, the intersection $\bigcap_{\alpha\in H} B_\alpha$ is non-empty. Letting $a$ be in this intersection, we see that for all $\alpha<\beta\in H$:
\begin{equation*}
H_\beta(a)\leq H_{\alpha'}(a)< H_\alpha(a).
\end{equation*}

Thus, we have an infinite descending sequence of ordinals, which is a contradiction. Therefore the induction must have terminated and the theorem follows.
\end{proof}

One thing to note is that the trichotomy theorem above is indeed a generalization of the classical trichotomy theorem. This follows from the discussion above showing that any ideal $I$ on a set $A$ is automatically $|A|^+$-regular. Our next goal is to show that, under the same assumptions as the above trichotomy theorem, one can actually build Good exact upper bounds. In other words, we need to show that it is still possible to produce sequences $\vec f$ in $\prod A/I$ which are neither Bad nor Ugly. 

Implicit in \cite{Sh} is the fact that one can manufacture sequences with a property that is referred to as $(*)_\theta$ in \cite{AbMag}, and furthermore this property is equivalent to $\mathrm{Good}_\theta$. The property $(*)_\theta$ and this equivalence are used to show that any Good eub will have an stationary set of good points. Unfortunately for us, it is not clear whether or not sequences satisfy $(*)_\theta$ if and only if they satisfy $\mathrm{Good}_\theta$. 

Fortunately for us, we can show that if a sequence satisfies $(*)_\theta$, then it satisfies $\mathrm{Good}_\theta$. Further, we can produce sequences which satisfy $(*)_\theta$ directly. Throughout, we will fix a set of ordinals $A$, and an ideal $I$ on $A$ such that $\reg(I)<\min (A)$.

\begin{definition}
Let $X$ be a set of ordinals, and let $\vec{f}=\langle f_\xi : \xi\in X\rangle$ be a $<_I$-increasing sequence of functions from $A$ to $\mathrm{ON}$. We say that $\vec{f}$ is strongly increasing if there are sets $Z_\xi\in I$ for each $\xi\in X$ such that, for any $\eta<\xi\in X$, we have that $f_\eta(a)<f_\xi(a)$ for all $a\in A\setminus( Z_\eta\cup Z_\xi)$.
\end{definition}

The idea behind strongly increasing sequences is that the sets $Z_\xi$ serve as canonical witnesses that the sequence is $<_I$-increasing. 

\begin{definition}
Let $\lambda$ be a regular cardinal, and let $\vec{f}=\langle f_\xi : \xi<\lambda\rangle$ be a $<_I$-increasing sequence of functions from $A$ to $\mathrm{ON}$. Letting $\theta\leq\lambda$ be a regular cardinal, we say that $\vec{f}$ satisfies $(*)_\theta$ if for every $X\subseteq \lambda$ unbounded in $\lambda$, there exists a set $X_0\subseteq \lambda$ of size $\theta$ such that $\langle f_\xi : \xi\in X_0\rangle$ is strongly increasing.
\end{definition}

We should note that satisfying $(*)_\theta$ for a sequence of functions is somewhat analogous to satisfying $\reg(I)\leq\theta$ for an ideal $I$. 

\begin{lemma}\label{lemma: not ugly}
Let $\lambda>\reg(I)$ be a regular cardinal, $\vec{f}=\langle f_\xi : \xi<\lambda\rangle$ be a $<_I$-increasing sequence of functions from $A$ to $\mathrm{ON}$, and $\theta\in[\reg(I),\lambda]\cap\mathrm{REG}$. If $\vec{f}$ satisfies $(*)_\theta$, then $\vec{f}$ is not $\mathrm{Ugly}$.
\end{lemma}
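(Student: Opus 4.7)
The plan is to argue by contradiction: suppose $\vec{f}$ is Ugly, so there exists $g: A \to \mathrm{ON}$ for which the $\subseteq_I$-increasing sequence $t_\xi = \{a \in A : f_\xi(a) > g(a)\}$ fails to stabilize modulo $I$. I would first use non-stabilization to build a strictly increasing sequence $\langle \xi_\alpha : \alpha < \lambda \rangle$ of ordinals in $\lambda$ such that $t_{\xi_\beta} \setminus t_{\xi_\alpha} \notin I$ for all $\alpha < \beta$; successor stages follow immediately from non-stabilization, and at limit stages one picks $\xi_\beta$ above the supremum $\zeta$ of the previous terms so that $t_{\xi_\beta} \setminus t_\zeta \notin I$, after which $\subseteq_I$-monotonicity of $\vec{t}$ propagates this gap to every earlier index.

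Next, I would apply $(*)_\theta$ to the unbounded set $X = \{\xi_\alpha : \alpha < \lambda\}$ to extract $X_0 \subseteq X$ of size $\theta$ along which $\vec f$ is strongly increasing, witnessed by sets $Z_\xi \in I$ for $\xi \in X_0$. Enumerating $X_0$ increasingly as $\{\eta_\gamma : \gamma < \theta\}$, define
\[
A_\gamma = t_{\eta_{\gamma+1}} \setminus \bigl( t_{\eta_\gamma} \cup Z_{\eta_\gamma} \cup Z_{\eta_{\gamma+1}} \bigr)
\]
for each $\gamma < \theta$. Each $A_\gamma$ is $I$-positive, since $t_{\eta_{\gamma+1}} \setminus t_{\eta_\gamma} \notin I$ by construction of $X$, while $Z_{\eta_\gamma} \cup Z_{\eta_{\gamma+1}} \in I$.

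Now I would invoke clause (3) of \cref{lem: reg}: as $\theta \geq \reg(I)$, there exist $H \in [\theta]^\theta$ and a point $a \in \bigcap_{\gamma \in H} A_\gamma$. Choose $\gamma, \delta \in H$ with $\gamma + 1 < \delta$, which is possible because $H$ is infinite. Membership of $a$ in $A_\gamma$ gives $f_{\eta_{\gamma+1}}(a) > g(a)$ together with $a \notin Z_{\eta_{\gamma+1}}$, while membership in $A_\delta$ gives $f_{\eta_\delta}(a) \leq g(a)$ together with $a \notin Z_{\eta_\delta}$. Strong increasingness applied to the pair $\eta_{\gamma+1} < \eta_\delta$ in $X_0$ (using $a \notin Z_{\eta_{\gamma+1}} \cup Z_{\eta_\delta}$) then yields $f_{\eta_{\gamma+1}}(a) < f_{\eta_\delta}(a)$, in direct conflict with $f_{\eta_{\gamma+1}}(a) > g(a) \geq f_{\eta_\delta}(a)$.

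The main obstacle, and the only real design decision, is how to package the positive sets $A_\gamma$ so that a single pigeonhole via $\theta$-regularity produces one point $a$ that simultaneously witnesses the failure of stabilization across one consecutive index gap and the strict pointwise increase guaranteed by strong increasingness across a different pair of indices. Absorbing the $Z$-sets into the definition of $A_\gamma$ is precisely what forces these two incompatible facts to meet at the same $a$; the rest of the argument is bookkeeping around that collision.
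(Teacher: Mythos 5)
Your proof is correct in substance and follows essentially the same route as the paper's: extract an unbounded $X$ along which the $t$-sets genuinely increase, thin to $X_0$ of size $\theta$ via $(*)_\theta$, form $I$-positive sets from the consecutive gaps minus the $Z$-witnesses, intersect via \cref{lem: reg}, and derive a pointwise contradiction. There is, however, one technical slip worth flagging. You write that ``as $\theta \geq \reg(I)$'' one may invoke clause (3) of \cref{lem: reg} to obtain $H \in [\theta]^\theta$, but clause (3) is an \emph{equivalent form of $\theta$-regularity}, and $\theta$-regularity is \emph{not} upward monotone in $\theta$ — the paper explicitly notes that indecomposability is neither upwards nor downwards hereditary, and $\theta$-regularity is the conjunction of weak $\theta$-saturation with $\theta$-indecomposability. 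So $\theta \geq \reg(I)$ does not by itself license applying clause (3) at $\theta$. The paper sidesteps this by instead passing to a subfamily of the $A_\xi$'s of length $\reg(I)$ and applying clause (3) at $\reg(I)$ (which holds by definition of $\reg(I)$), obtaining $H$ of size $\reg(I) \leq \theta$. This is enough: $\reg(I)$ is an infinite regular cardinal, so $H$ certainly contains $\gamma < \delta$ with $\gamma + 1 < \delta$ as ordinals, and the rest of your contradiction goes through verbatim (note that $\gamma+1$ need not lie in $H$ for the argument to work, since the strong increase is applied across $\eta_{\gamma+1}$ and $\eta_\delta$ in $X_0$, both of which avoid the relevant $Z$-sets by membership of $a$ in $A_\gamma$ and $A_\delta$). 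With that one adjustment your proof matches the paper's.
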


\begin{proof}
Suppose otherwise, and let $g:A\to\mathrm{ON}$ witness that $\vec{f}$ is Ugly. That is, letting $t_\xi=\{a\in A : f_\xi(a)> g(a)\}$ for each $\xi<\lambda$, the sequence $\vec{t}=\langle t_\xi : \xi<\lambda\rangle$ does not stabilize modulo $I$. So for each $\xi<\lambda$, there is some $\eta>\xi$ such that $t_\eta\setminus t_\xi\notin I$. Using this, we can find an unbounded $X\subseteq \lambda$ such that, for all $\xi,\eta\in X$ with $\xi<\eta$, we have $t_\eta\setminus t_\xi\notin I$. Next, we use the fact that $\vec{f}$ satisfies $(*)_\theta$ to fix a set $X_0\subseteq X$ of size $\theta$ such that $\langle f_\xi : \xi\in X_0\rangle$ is strongly increasing as witnessed by $Z_\xi$ for each $\xi\in X_0$. 

For each $\xi\in X_0$, let $\xi'=\min(X_0\setminus(\xi+1))$ be the successor of $\xi$ in $X_0$, and let 
\begin{equation*}
A_\xi=(t_{\xi'}\setminus t_\xi)\cap (A\setminus(Z_{\xi'}\cup Z_\xi)).
\end{equation*}
Note that $A_\xi\notin I$ for each $\xi\in X_0$, and so we can find some $H\subseteq X_0$ of size $\reg(I)\leq\theta$ such that $\bigcap_{\xi\in H}A_\xi\neq\emptyset$. Let $a$ be in this intersection, and let $\xi,\eta\in H$ with $\xi<\eta$. Then we have that

\begin{equation*}
g(a)\geq f_\eta(a)\geq f_{\xi'}(a)>g(a).
\end{equation*}
Note that we get the first inequality from the fact that $a\notin t_{\eta}$, while the second inequality comes from the fact that $a\notin Z_\eta\cup Z_{\xi'}$ with $\eta\geq\xi'>\xi$, and the final inequality comes from the fact that $a\in t_{\xi'}$. This gives us that $g(a)>g(a)$, which is of course a contradiction.
\end{proof}

\begin{lemma}\label{lemma: not bad}
Let $\lambda>\reg(I)$ be a regular cardinal, $\vec{f}=\langle f_\xi : \xi<\lambda\rangle$ be a $<_I$-increasing sequence of functions from $A$ to $\mathrm{ON}$, and let $\theta\leq \lambda$ be regular such that $I$ is $\theta$-regular. If $\vec{f}$ satisfies $(*)_\theta$, then $\vec{f}$ is not $\mathrm{Bad}_\theta$.
\end{lemma}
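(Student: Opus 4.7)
The plan is to argue by contradiction: suppose $\vec{f}$ is $\mathrm{Bad}_\theta$, witnessed by sets $S(a)$ with $|S(a)|<\theta$, functions $h_\xi\in\prod_{a\in A}S(a)$, ordinals $\eta(\xi)<\lambda$, and an ultrafilter $D$ over $A$ disjoint from $I$ (so $I^*\subseteq D$, which in particular lets us pass $<_I$-inequalities to $<_D$-inequalities). The strategy is to use $(*)_\theta$ to extract a strongly increasing subsequence of length $\theta$ sitting inside a well-chosen club, then combine $\theta$-regularity of $I$ with a pigeonhole on the small fibres $S(a^*)$ to produce a contradiction of the form $v<v$.

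The first step is to consider the set $C=\{\xi<\lambda : \eta(\zeta)<\xi\text{ for all }\zeta<\xi\}$, which is a club in $\lambda$ because $\lambda$ is regular with $\lambda>\reg(I)\geq\aleph_0$, and hence $\cf(\lambda)>\aleph_0$. Applying $(*)_\theta$ to $C$ yields $X_0\subseteq C$ of size $\theta$ such that $\langle f_\xi : \xi\in X_0\rangle$ is strongly increasing, with witnesses $Z_\xi\in I$. Enumerate $X_0=\{\xi_\alpha : \alpha<\theta\}$ in increasing order; since $\xi_\alpha<\xi_{\alpha+1}\in C$, the defining property of $C$ gives $\eta(\xi_\alpha)<\xi_{\alpha+1}$, and hence $f_{\eta(\xi_\alpha)}<_I f_{\xi_{\alpha+1}}$. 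For each $\alpha<\theta$, define
\begin{equation*}
G_\alpha:=\{a\in A : f_{\xi_\alpha}(a)<h_{\xi_\alpha}(a)<f_{\eta(\xi_\alpha)}(a)<f_{\xi_{\alpha+1}}(a)\}\setminus(Z_{\xi_\alpha}\cup Z_{\xi_{\alpha+1}}).
\end{equation*}
Each of the three strict inequalities cuts out a $D$-set (two from the $\mathrm{Bad}_\theta$ witness, the third from $f_{\eta(\xi_\alpha)}<_I f_{\xi_{\alpha+1}}$ and $I^*\subseteq D$), and the complements of $Z_{\xi_\alpha}$ and $Z_{\xi_{\alpha+1}}$ lie in $I^*\subseteq D$; hence $G_\alpha\in D\subseteq I^+$.

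Applying condition (3) of \cref{lem: reg} to $\langle G_\alpha : \alpha<\theta\rangle$ produces $H\in[\theta]^\theta$ and $a^*\in\bigcap_{\alpha\in H}G_\alpha$. Since $\{h_{\xi_\alpha}(a^*):\alpha\in H\}\subseteq S(a^*)$ with $|S(a^*)|<\theta=|H|$ and $\theta$ regular, a pigeonhole provides $H'\in[H]^\theta$ and a common value $v\in S(a^*)$ with $h_{\xi_\alpha}(a^*)=v$ for every $\alpha\in H'$. Fix any $\alpha<\beta$ in $H'$: from $a^*\in G_\alpha$, $v=h_{\xi_\alpha}(a^*)<f_{\xi_{\alpha+1}}(a^*)$; since $a^*\notin Z_{\xi_{\alpha+1}}\cup Z_{\xi_\beta}$ and $\alpha+1\leq\beta$, strongly increasing yields $f_{\xi_{\alpha+1}}(a^*)\leq f_{\xi_\beta}(a^*)$; and from $a^*\in G_\beta$, $f_{\xi_\beta}(a^*)<h_{\xi_\beta}(a^*)=v$, giving the contradiction $v<v$. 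The main obstacle I anticipate is the tension between pigeonholing on $S(a^*)$ (which uses many $\alpha$'s in $H$) and transferring the information $h_{\xi_\alpha}<_D f_{\eta(\xi_\alpha)}$ onto $f_{\xi_\beta}$ for the paired $\beta$: a naive attempt intersects $\theta^2$ pair-dependent $I^*$-sets, which need not stay positive. The fix is to absorb the single hop $\eta(\xi_\alpha)\mapsto\xi_{\alpha+1}$ into the per-$\alpha$ set $G_\alpha$ (made available by the club $C$), and then to use the strongly increasing witnesses only once to bridge from $\xi_{\alpha+1}$ to $\xi_\beta$, reducing the whole argument to one application of $\theta$-regularity.
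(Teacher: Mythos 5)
Your proof is correct and follows essentially the same strategy as the paper's: extract via $(*)_\theta$ a strongly increasing $\theta$-sized subsequence from a suitable unbounded set, intersect the $D$-sets witnessing Badness with the complements of the $Z_\xi$'s, apply $\theta$-regularity to find a single coordinate $a^*$ in $\theta$-many of these $I$-positive sets, and derive a contradiction from the fact that the values $h_{\xi_\alpha}(a^*)$ land in $S(a^*)$ of size $<\theta$. The only cosmetic differences are that you make explicit the club $C$ of closure points of $\xi\mapsto\eta(\xi)$ (which the paper's choice of the unbounded set $X$ leaves implicit), and you finish with a pigeonhole to a repeated value $v$ rather than observing directly that $\langle h_{\xi_\alpha}(a^*):\alpha\in H\rangle$ is a strictly increasing $\theta$-sequence in a set of size $<\theta$ — these are interchangeable.
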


\begin{proof}
Suppose otherwise, and let $S=\langle S(a): a\in A\rangle$ and $D$ witness that $\mathrm{Bad}_\theta$ holds. Let $X\subseteq \lambda$ be unbounded such that for all $\xi,\eta\in X$ with $\xi<\eta$, there is a function $h_\xi\in\prod_{a\in A}S(a)$ such that $f_\xi<_D h_\xi<_D f_\eta$. Using the fact that $\vec{f}$ satisfies $(*)_\theta$, let $X_0\subseteq X$ be of size $\theta$ such that $\langle f_\xi : \xi\in X_0\rangle$ is strongly increasing as witnessed by $Z_\xi\in I$ for each $\xi\in X_0$.

As before, for each $\xi\in X_0$, we let $\xi'=\min(X_0\setminus(\xi+1))$ be the successor of $\xi$ in $X_0$. For each $\xi\in X_0$, let $B_\xi=\{a \in A: f_\xi(a)<h_\xi(a) < f_{\xi'}(a)\}$ and define
\begin{equation*}
A_\xi= B_\xi\cap(A\setminus(Z_{\xi'}\cup Z_\xi)).
\end{equation*}
Note that $B_\xi\in D$, and so each $A_\xi$ is $I$-positive. So we can find some $H\subseteq X_0$ of size $\theta$ such that $\bigcap_{\xi\in H}A_\xi\neq\emptyset$, so let $a$ be in this intersection. Then for every $\xi, \eta\in X$ with $\xi<\eta$, we have that
\begin{equation*}
h_\xi(a)<f_{\xi'}(a)\leq f_{\eta}(a)<h_{\eta}(a).
\end{equation*}
The first inequality follows from $a\in B_\xi$, while the third follows from the fact that $a\in B_\eta$. The second inequality comes from the fact that $\xi'\leq\eta$ and $a\notin Z_{\xi'}\cup Z_\eta$. But then the sequence $\langle h_\xi(a) : \xi\in X_0\rangle$ is strictly increasing along $S(a)$ while $|S(a)|<\theta=|X_0|$ which is absurd.
\end{proof}

So for our purposes, it suffices to be able to construct sequences satisfying $(*)_\theta$ for appropriate $\theta$. We now quote Lemma 2.19 from \cite{AbMag}, which gives us conditions for constructing such sequences.

\begin{lemma}\label{lemma: star sequences}
Suppose that

\begin{enumerate}
\item $I$ is an ideal over $A$;
\item $\theta$ and $\lambda$ are regular cardinals such that $\theta^{++}<\lambda$;
\item $\vec{f}=\langle f_\xi : \xi<\lambda\rangle$ is a $<_I$-increasing sequence of functions from $A$ to $\mathrm{ON}$ such that for every $\delta\in S^{\lambda}_{\theta^{++}}$, there is a club $E_\delta\subseteq \delta$ such that for some $\delta\leq\delta'<\lambda$, we have 

\begin{equation*}
\sup\{f_\alpha : \alpha\in E_\delta\}<_I f_{\delta'};
\end{equation*}
Then $(*)_\theta$ holds for $\vec{f}$.
\end{enumerate}
\end{lemma}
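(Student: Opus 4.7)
The plan is to apply the hypothesis at a $\delta \in S^{\lambda}_{\theta^{++}}$ that is a limit point of the given unbounded $X\subseteq \lambda$, and then use the resulting club $E_\delta$ together with the bounding function $f_{\delta'}$ to extract a strongly increasing subsequence of length $\theta$ from $X$.

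First I would observe that the set of limit points of $X$ is a club in $\lambda$, and $S^{\lambda}_{\theta^{++}}$ is stationary in $\lambda$ (since $\theta^{++}<\lambda$ with $\lambda$ regular), so one can fix $\delta \in S^{\lambda}_{\theta^{++}}$ with $\delta=\sup(X\cap\delta)$. Applying the hypothesis gives a club $E_\delta\subseteq\delta$ and some $\delta'\in[\delta,\lambda)$ with $\sup\{f_\alpha:\alpha\in E_\delta\}<_I f_{\delta'}$. Set $h:=f_{\delta'}$ and
\[
Z^* := \{a\in A : \sup\nolimits_{\alpha\in E_\delta} f_\alpha(a)\ge h(a)\}\in I,
\]
so that $f_\epsilon(a)<h(a)$ for every $\epsilon\in E_\delta$ and every $a\notin Z^*$. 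Notice in particular that for each $\epsilon\in E_\delta$ the set $\{a:f_\epsilon(a)\geq h(a)\}$ is a subset of $Z^*$, so the bound $h$ controls the whole club $E_\delta$ uniformly off $Z^*$.

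Next, since both $X\cap\delta$ and $E_\delta$ are unbounded in $\delta$, I would interleave them to form a candidate sequence: build $\alpha_0<\epsilon_0<\alpha_1<\epsilon_1<\cdots$ of length $\theta$ with $\alpha_\xi\in X$ and $\epsilon_\xi\in E_\delta$, then set $X_0:=\{\alpha_\xi:\xi<\theta\}\subseteq X$ and
\[
Z_{\alpha_\xi} := Z^*\cup\{a\in A : f_{\alpha_\xi}(a)\ge f_{\epsilon_\xi}(a)\},
\]
which lies in $I$ because $\alpha_\xi<\epsilon_\xi$ forces $f_{\alpha_\xi}<_I f_{\epsilon_\xi}$. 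Then for $a\notin Z_{\alpha_\xi}$ one has $f_{\alpha_\xi}(a)<f_{\epsilon_\xi}(a)<h(a)$, anchoring every term of $X_0$ beneath $h$.

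Finally I would verify strong increase: for $\eta<\xi<\theta$ and $a\notin Z_{\alpha_\eta}\cup Z_{\alpha_\xi}$, the desired $f_{\alpha_\eta}(a)<f_{\alpha_\xi}(a)$ would follow from the chain $f_{\alpha_\eta}(a)<f_{\epsilon_\eta}(a)\le f_{\alpha_\xi}(a)$, provided the middle inequality $f_{\epsilon_\eta}(a)\le f_{\alpha_\xi}(a)$ holds off $Z_{\alpha_\eta}\cup Z_{\alpha_\xi}$. This intermediate step is the main obstacle: a priori it only holds off the pair-dependent $I$-set $U_{\eta,\xi}:=\{a:f_{\epsilon_\eta}(a)>f_{\alpha_\xi}(a)\}$, and absorbing $\bigcup_{\eta<\xi} U_{\eta,\xi}$ uniformly into the single exception set $Z_{\alpha_\xi}\in I$ is not automatic without $\theta$-completeness of $I$. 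This is where the hypothesis's gap between $\theta^+$ and $\theta^{++}$ becomes important: one first constructs a length-$\theta^{++}$ interleaved candidate sequence in $X\cap\delta$, and then applies a pressing-down or reflection argument, for instance via a continuous tower of elementary submodels of size $\theta^+$ capturing $\vec f$, $I$, $X$, and $h$, to refine to a $\theta$-subsequence along which the pair-dependencies uniformize so that $Z_{\alpha_\xi}$ as defined above suffices. I expect this combinatorial refinement to be the crux of the proof.
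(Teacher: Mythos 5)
The paper does not actually prove this lemma; it is quoted verbatim as Lemma~2.19 of Abraham--Magidor, so there is no in-paper argument to compare your proposal against. That said, your setup is the standard one and is correct as far as it goes: pick a limit point $\delta$ of $X$ with $\cf(\delta)=\theta^{++}$, extract $E_\delta$, the bound $h=f_{\delta'}$, and the master exception set $Z^*$, then interleave $X$ with $E_\delta$. More importantly, you correctly and honestly diagnose the central obstruction: the inequality $f_{\epsilon_\eta}(a)\le f_{\alpha_\xi}(a)$ is only witnessed off an $I$-set $U_{\eta,\xi}$ that depends on the \emph{pair} $(\eta,\xi)$, and absorbing $\bigcup_{\eta<\xi}U_{\eta,\xi}$ into a single-index exception set $Z_{\alpha_\xi}$ would require something like $\theta$-completeness of $I$, which is not available. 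That diagnosis is exactly right, and it is indeed the crux of the lemma.

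However, the proposal does not close this gap. The suggested fix --- first build a $\theta^{++}$-length interleaved sequence and then apply ``a pressing-down or reflection argument, for instance via a continuous tower of elementary submodels of size $\theta^+$'' --- is a pointer at a family of techniques rather than an argument, and you flag it yourself as unresolved. It is also worth noting that in the generality of the lemma there is no hypothesis bounding $|A|$ by $\theta$ or $\theta^+$, so a tower of $\theta^+$-sized models cannot be expected to trap the values $f_\alpha(a)$ for all $a\in A$ via the usual characteristic-function tricks; whatever refinement works has to lean on the single uniform witness $Z^*$ (which bounds the entire club pointwise by $h$) together with the full $\theta^{++}$-length, and none of that machinery is supplied here. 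So the skeleton and the diagnosis are correct, but the proof as written leaves a genuine unclosed gap at precisely the step you identified, and that step is the substantive content of the lemma.
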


It turns out that, while the above lemma looks technical, constructing sequences with the above properties is itself easy. The proof of the following theorem is identical to the proof of Theorem 2.21 of \cite{AbMag}, but we include it for the sake of completeness.

\begin{theorem}\label{thm: good sequences}
Suppose that $A$ is a set of regular cardinals. Let $\lambda>\reg(I)$ be a regular cardinal such that $\prod A/I$ is $\lambda$-directed, and let $\vec{f}=\langle f_\xi : \xi<\lambda\rangle$ be any $<_I$-increasing sequence of functions in $\prod A$. Then there exists a sequence $\vec{g}=\langle g_\xi : \xi<\lambda\rangle$ such that: 

\begin{enumerate}
\item $\vec{g}$ is $<_I$-increasing;
\item for each $\xi<\lambda$, we have $f_\xi<g_{\xi+1}$;
\item for every $\theta<\lambda$ regular such that $\theta^{++}<\lambda$, $\{a\in A : a\leq \theta^{++}\}\in I$, and $I$ is $\theta$-regular, we have that $\vec{g}$ is $\mathrm{Good}_\theta$.
\end{enumerate}
\end{theorem}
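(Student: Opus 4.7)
The plan is to construct $\vec g$ by transfinite recursion on $\xi<\lambda$, arranging at limit stages that the hypothesis of \cref{lemma: star sequences} is met, and then combining \cref{lemma: star sequences}, \cref{lemma: not ugly}, \cref{lemma: not bad}, and the trichotomy (\cref{thm: trichotomy}) to conclude $\mathrm{Good}_\theta$ for each $\theta$ as in~(3).

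Start with any $g_0\in\prod A$. At a successor stage, I will use the $\lambda$-directedness of $\prod A/I$ to find some $h\in\prod A$ with $g_\xi<_I h$, and then set $g_{\xi+1}(a):=\max\{h(a),f_\xi(a)+1\}$; regularity of each $a\in A$ keeps $g_{\xi+1}\in\prod A$, and this immediately handles (1) and (2) at successor stages. At a limit $\delta<\lambda$, I will fix a cofinal $E_\delta\subseteq\delta$ of order type $\cf(\delta)$, use $\lambda$-directedness to pick $h_\delta\in\prod A$ with $g_\alpha<_I h_\delta$ for every $\alpha<\delta$, and define
\begin{equation*}
g_\delta(a):=\begin{cases}\max\bigl\{h_\delta(a),\,\sup\{g_\alpha(a):\alpha\in E_\delta\}\bigr\}&\text{if }a>\cf(\delta),\\ h_\delta(a)&\text{otherwise.}\end{cases}
\end{equation*}
Since each $a\in A$ is regular and $|E_\delta|=\cf(\delta)<a$ on the first branch, the sup stays strictly below $a$, so $g_\delta\in\prod A$; and because $g_\delta\geq h_\delta$ pointwise, $g_\alpha<_I h_\delta\leq g_\delta$ for every $\alpha<\delta$, so $\vec g$ is strictly $<_I$-increasing.

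For (3), fix a regular $\theta$ as in the hypothesis. Any $\delta\in S^\lambda_{\theta^{++}}$ has $\cf(\delta)=\theta^{++}$, so the assumption $\{a\in A:a\leq\theta^{++}\}\in I$ combined with the definition of $g_\delta$ on the branch $a>\cf(\delta)$ gives
\begin{equation*}
\sup\{g_\alpha:\alpha\in E_\delta\}\leq_I g_\delta<_I g_{\delta+1},
\end{equation*}
so the hypothesis of \cref{lemma: star sequences} is met with $\delta'=\delta+1$. Hence $\vec g$ satisfies $(*)_\theta$, and because $\theta\geq\reg(I)$ (as $I$ is $\theta$-regular by assumption), \cref{lemma: not ugly} and \cref{lemma: not bad} rule out Ugly and $\mathrm{Bad}_\theta$; the trichotomy theorem then forces $\mathrm{Good}_\theta$.

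The delicate point is the limit-stage definition of $g_\delta$: one must simultaneously keep $g_\delta$ inside $\prod A$, make it a strict $<_I$-upper bound of the previous stages, and arrange that it dominates the $E_\delta$-sup on an $I^*$-set. The case split on whether $a>\cf(\delta)$ handles the first two unconditionally, while the extra hypothesis $\{a\in A:a\leq\theta^{++}\}\in I$ in (3) is precisely what makes the restriction to $a>\cf(\delta)$ harmless modulo $I$ for the $\delta\in S^\lambda_{\theta^{++}}$ used to verify $(*)_\theta$.
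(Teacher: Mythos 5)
Your construction is essentially the paper's: build $\vec g$ by recursion so that the hypothesis of \cref{lemma: star sequences} holds at points of cofinality $\theta^{++}$, then combine \cref{lemma: star sequences}, \cref{lemma: not ugly}, \cref{lemma: not bad}, and \cref{thm: trichotomy} to get $\mathrm{Good}_\theta$; your uniform limit-stage formula with the case split on $a>\cf(\delta)$ is a slightly cleaner packaging of the paper's case split on whether $\cf(\delta)=\theta^{++}$, since it keeps $g_\delta$ literally inside $\prod A$ rather than only modulo $I$. One small slip: \cref{lemma: star sequences} requires $E_\delta$ to be a \emph{club} in $\delta$, not merely a cofinal set of order type $\cf(\delta)$, so you should take $E_\delta$ to be a club of that order type (which always exists and changes nothing else in your argument).
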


\begin{proof}
By \cref{lemma: not ugly} and \cref{lemma: not bad}, it suffices to produce a sequence which satisfies $(*)_\theta$ for every appropriate $\theta$. In other words, we only need to produce a sequence satisfying the last condition in \cref{lemma: star sequences}. We proceed by induction on $\xi<\lambda$.

At stage $0$, we simply let $g_0$ be any function in $\prod A/I$. At successor stages, suppose that $g_\xi$ has been defined and let $g_{\xi+1}$ be defined by 
\begin{equation*}
g_{\xi+1}(a)=\max\{g_\xi(a), f_\xi(a)\}+1.
\end{equation*}
At limit stages $\delta$, we have two cases to deal with. In the first case, we suppose that $cf(\delta)=\theta^{++}$ for $\theta$ as in condition $(3)$, and let $E_\delta\subseteq \delta$ be club of order type $\theta^{++}$. Define 
\begin{equation*}
g_\delta=\sup\{g_\xi : \xi\in E_\delta\},
\end{equation*}
and note that $g_\delta(a)<a$ whenever $a>\theta^{++}$ and so $g_\delta\in\prod A/I$. In the other case, simply let $g_\delta'$ be a $\leq_I$-upper bound of $\{g_\xi : \xi<\delta\}$ and set $g_\delta=g_\delta'+1$. 

By construction, the sequence $\vec{g}$ satisfies the hypotheses of \cref{lemma: star sequences}, and so we are finished.
\end{proof}
\section{Generators for $\lambda\in\pcf_I(A)$}

In the classical pcf theory, the trichotomy theorem is used to produce generators for every $\lambda\in \pcf(A)$. We would like to do exactly that for each $\lambda \in\pcf_I(A)$ when $A$ is a set of regular cardinals, and $I$ is an ideal on $A$ satisfying $\reg(I)<\min(A)$. As noted earlier, Shelah does obtain generators for $\pcf_I(A)$ in \cite{Sh506}. The benefit of our approach is that the exposition has been streamlined to mimic the modern development of pcf theory as found in \cite{AbMag}. The results in the section are due to Shelah unless otherwise noted.

\begin{definition}
Suppose that $(P, <)$ is a partial order. We say that $P$ has true cofinality $\lambda$ if there is a $<$-linearly ordered family $F\subseteq P$ of cofinality $\lambda$ which is itself cofinal in $(P,<)$. In this case, we write $\tcf(P,<)=\lambda$, though the ordering $<$ may be omitted if it is clear from the context.
\end{definition}

We should note that $\tcf(P,<)$ may not always be defined, as $(P,<)$ could very well not have a linearly ordered cofinal subset. When it is defined, it is always a regular cardinal and $\tcf(P,<)=\cf(P,<)$. 

\begin{definition}
Suppose $A$ is a collection of ordinals and $I$ is a fixed ideal over $A$. Define

\begin{equation*}
\pcf_I(A)=\{\cf(\prod A/D) : D\text{ is an ultrafilter over }A\text{ disjoint from }I \}.
\end{equation*}.
\end{definition}

As $\prod A/D$ is linearly ordered, it follows that every element of $\pcf_I(A)$ is a regular cardinal.

\begin{definition}
Suppose $A$ is a collection of ordinals and $I$ is a fixed ideal over $A$. The ideal $J_{<\lambda}^I[A]$ is defined to be the collection of sets $B\subseteq A$ satisfying:

\begin{enumerate}
\item $B\in I$ or;
\item for every ultrafilter $D$ over $A$ disjoint from $I$, if $B\in D$, then $\cf(\prod A/D)<\lambda$.
\end{enumerate}
We will denote these ideals by $J_{<\lambda}^I$ if the set $A$ is clear from context.
\end{definition}

We now highlight a number of simple properties of $\pcf_I(A)$ and $J_{<\lambda}^I[A]$. 

\begin{lemma}\label{lem: properties}
Suppose $A$ is a collection of ordinals and $I$ is a fixed ideal over $A$. 

\begin{enumerate}
\item If $\lambda\in \pcf_I(A)$, then $J_{<\lambda}^I$ is proper.
\item If $\lambda\notin\pcf_I(A)$, then $J_{<\lambda}^I=J_{<\lambda^+}^I$.
\item If $F$ is a filter disjoint from $I$ with $\lambda=\tcf(\prod A/F)$, then $\lambda\in\pcf_I(A)$.
\item For $B\subseteq A$ which is $I$-positive, if we set $I_B:=\mathcal{P}(B)\cap I$, then $\pcf_{I_B}(B)\subseteq\pcf_I(A)$.
\item If $I$ and $J$ are ideals over $A$ such that $I\subseteq J$, then $\pcf_J(A)\subseteq\pcf_I(A)$.
\item If $B\subseteq A$ is such that $B=_I A$, then $\pcf_{I_B}(B)=\pcf_I(A)$.
\end{enumerate}
\end{lemma}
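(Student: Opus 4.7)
The plan is to verify each of the six items in turn, since they are essentially bookkeeping statements about how ultrafilters disjoint from $I$ behave under extension, restriction, and quotienting by a larger ideal. Items (1), (2), (5) unpack directly from the definitions, while (3), (4), (6) require one to move between filters and reduced products, using the evident isomorphism induced by restriction whenever one filter is contained in another on a common support.

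For (1), fix an ultrafilter $D$ disjoint from $I$ witnessing $\lambda\in\pcf_I(A)$. Since $D$ is disjoint from $I$ we have $A\notin I$, and $A\in D$ with $\cf(\prod A/D)=\lambda\not<\lambda$, so $A$ fails both conditions defining $J_{<\lambda}^I$. Item (5) is immediate: if $I\subseteq J$, then any ultrafilter disjoint from $J$ is already disjoint from $I$, so the pcf sets inclusion follows. For (2), the inclusion $J_{<\lambda}^I\subseteq J_{<\lambda^+}^I$ is trivial; conversely if $B\in J_{<\lambda^+}^I\setminus I$ then every ultrafilter $D$ disjoint from $I$ containing $B$ satisfies $\cf(\prod A/D)<\lambda^+$, hence $\le\lambda$, and the equality $\cf(\prod A/D)=\lambda$ is forbidden because $\lambda\notin\pcf_I(A)$.

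For (3), the first step is to extend $F$ to an ultrafilter $D$ still disjoint from $I$. The family $F\cup\{A\setminus C : C\in I\}$ has the finite intersection property, since any $X\in F$ is $I$-positive and therefore meets $A\setminus C$ for every $C\in I$; extend to an ultrafilter $D$ by Zorn. Then $F\subseteq D$, so the witnessing $<_F$-increasing cofinal sequence in $\prod A/F$ of order type $\lambda$ remains strictly $<_D$-increasing and $<_D$-cofinal, giving $\cf(\prod A/D)=\lambda$. For (4), given an ultrafilter $D$ on $B$ disjoint from $I_B$ witnessing $\lambda\in\pcf_{I_B}(B)$, define $D':=\{X\subseteq A : X\cap B\in D\}$. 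Then $D'$ is an ultrafilter on $A$, disjoint from $I$ (since $C\in I$ forces $C\cap B\in I_B$, hence $C\cap B\notin D$), and the restriction map $f\mapsto f\upharpoonright B$ is an isomorphism of linear orders $\prod A/D'\to\prod B/D$, so $\cf(\prod A/D')=\lambda$. Finally (6): the inclusion $\pcf_{I_B}(B)\subseteq\pcf_I(A)$ is (4), and conversely if $D$ witnesses $\lambda\in\pcf_I(A)$ then $A\setminus B\in I$ forces $B\in D$, so the restriction $D\upharpoonright B$ is an ultrafilter on $B$ disjoint from $I_B$ whose reduced product is isomorphic to $\prod A/D$.

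None of this is really hard; the only point that is not pure unwinding of definitions is the extension step in (3), where one must check that adding the dual of $I$ to $F$ preserves the finite intersection property, and then verify that passing from $F$ to the larger ultrafilter $D$ neither shortens nor destroys cofinality of the witnessing chain. Once that is observed, the remaining arguments amount to transporting ultrafilters along the inclusions $B\subseteq A$ and $I\subseteq J$ and citing the fact that reduced products on the same support by equal-modulo-$I$ data are canonically identified.
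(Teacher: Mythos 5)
Your proof is correct and, for part (6) — the only item the paper actually proves in detail — it takes essentially the same route as the paper: observe $B\in D$, pass to the restricted ultrafilter $D_B=\mathcal{P}(B)\cap D$, and note that restriction of a cofinal sequence (equivalently, the isomorphism $\prod A/D\cong\prod B/D_B$) transports the cofinality. For the remaining items, which the paper explicitly leaves as routine, your verifications are accurate; the one point worth phrasing a bit more carefully is the FIP check in (3), where one should first close under finite intersections in $F$ and finite unions in $I$ before arguing that $X\subseteq C$ would force $X\in I$, but this is exactly the step you sketched.
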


\begin{proof}
As the proofs of these results are routine, we will content ourselves with only proving $(6)$, and leaving the rest for the reader.

(6): We already know that $\pcf_{I_B}(B)\subseteq\pcf_I(A)$ from (4), so it suffices to show the other direction. With that in mind, let $\lambda\in\pcf_I(A)$ and let $D$ be an ultrafilter over $A$ disjoint from $I$ such that $\cf(\prod A/D)=\lambda$. Note that $B\in D$ so we can define $D_B=\mathcal{P}(B)\cap D$ which is an ultrafilter over $B$ extending $I^*_B$. Now if we let $\vec{f}=\langle f_\xi : \xi<\lambda\rangle$ be cofinal in $\prod A/D$, then we see that $\vec{f}\upharpoonright B=\langle f_\xi\upharpoonright B: \xi<\lambda\rangle$ is cofinal in $\prod B/D_B$. Thus, $\lambda\in \pcf_{I_B}(B)$.
\end{proof}

With (4) and (6) above in mind, we set aside some notation

\begin{definition}
Suppose $A$ is a collection of ordinals, and $I$ is some ideal over $A$. For any $I$-positive $B\subseteq A$, we define 
\begin{equation*}
\pcf_I(B):=\pcf_{I_B}(B)
\end{equation*}
\end{definition}

We first show that only assuming $\wsat(I)<\min(A)$ is enough to get $\lambda$-directedness of $J_{<\lambda}^I[A]$ whenever it is proper. The following appears as Lemma 1.9 in \cite{Sh506}, but we include a proof for the sake of completeness.

\begin{lemma}\label{lemma: directedness}
Suppose that $A$ is a collection of regular cardinals with no maximum, and that $I$ is an ideal over $A$ such that $\min(A)>\wsat(I)$. If $\lambda\geq \wsat (I)$ is a cardinal with $J_{<\lambda}^I[A]$ proper, then $\prod A/J_{<\lambda}^I$ is $\lambda$-directed.
\end{lemma}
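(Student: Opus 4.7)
I would proceed by transfinite induction on $\kappa<\lambda$, aiming to show that every $F\subseteq\prod A$ with $|F|=\kappa$ admits an upper bound modulo $J_{<\lambda}^I[A]$. The zero, successor, and singular-limit cases would be handled routinely via the inductive hypothesis. For $\kappa$ infinite regular with $\kappa<\min(A)$---which covers the range $\kappa\le\wsat(I)$ since $\wsat(I)<\min(A)$---each $a\in A$ is a regular cardinal exceeding $\kappa$, so the pointwise supremum of $F$ stays in $\prod A$ and gives an upper bound directly. The real work is the case $\kappa\ge\min(A)$, in which $\kappa>\wsat(I)$.

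In that case I would first use the induction hypothesis to build a $<_{J_{<\lambda}^I}$-strictly-increasing sequence $\vec g=\langle g_\alpha:\alpha<\kappa\rangle$ in $\prod A$ dominating $F$. Letting $\bar g(a)=\sup_{\alpha<\kappa}g_\alpha(a)$ and $B=\{a\in A:\bar g(a)\ge a\}$, the function $h\in\prod A$ defined by $h(a)=\bar g(a)$ when $\bar g(a)<a$ and $h(a)=0$ otherwise satisfies $\{a:g_\alpha(a)>h(a)\}\subseteq B\subseteq\{a\in A:a\le\kappa\}$ for every $\alpha$, so producing an upper bound for $\vec g$ modulo $J_{<\lambda}^I$ reduces to the key claim $B\in J_{<\lambda}^I$. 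To attack this, I would assume $B\notin J_{<\lambda}^I$ and choose an ultrafilter $D\supseteq I^*$ with $B\in D$ and $\cf(\prod A/D)\ge\lambda$, which is then automatically disjoint from $J_{<\lambda}^I$. So $\vec g$ is $<_D$-strictly-increasing of length $\kappa<\lambda\le\cf(\prod A/D)$ and has an upper bound $h^*\in\prod A$ modulo $D$; for each $a\in B$, the cofinality of $\langle g_\alpha(a)\rangle$ in $a$ together with $h^*(a)<a$ gives $\alpha(a):=\min\{\alpha<\kappa:g_\alpha(a)>h^*(a)\}<\kappa$, yielding a partition $B=\bigsqcup_{\alpha^*<\kappa}B_{\alpha^*}$ in which every fiber $B_{\alpha^*}\subseteq\{a:g_{\alpha^*}(a)>h^*(a)\}$ fails to lie in $D$.

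The hard part will be extracting a contradiction from the $D$-set $B$ being partitioned into $\kappa$-many $D$-null pieces: this would be immediate if $D$ were $\kappa$-complete, but in general it is not. The substitute is weak $\kappa$-saturation of $I$, which holds because $\kappa>\wsat(I)$. Combined with $D\cap I=\emptyset$, weak saturation forces strictly fewer than $\wsat(I)$ of the fibers $B_{\alpha^*}$ to be $I$-positive: if $\wsat(I)$-many were $I$-positive, merging the remaining $I$-null fibers and $A\setminus B$ into one of them would yield a partition of $A$ into $\wsat(I)$-many $I$-positive pieces, contradicting the definition of $\wsat(I)$. So modulo $I$, $B$ coincides with a union of fewer than $\wsat(I)$ $I$-positive fibers. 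The delicate remaining step is to arrange that the ultrafilter $D$---chosen judiciously from the outset, exploiting the $\wsat(I)$-chain condition on the Boolean algebra $P(A)/I$ induced by the hypothesis $\min(A)>\wsat(I)$---contains one of these few fibers, contradicting $B_{\alpha^*}\notin D$. This final pigeonhole step, which converts weak saturation of $I$ into the completeness-like behaviour that a $\kappa$-complete $D$ would have provided for free, is the technical heart of Shelah's argument.
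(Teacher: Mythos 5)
Your argument is sound up to the final step, but that final step is where the whole weight of the lemma sits, and the single-ultrafilter strategy you outline does not close it. After you fix one ultrafilter $D\ni B$ with $D\cap I=\emptyset$ and $\cf(\prod A/D)\geq\lambda$, and produce the partition $B=\bigsqcup_{\alpha^*<\kappa}B_{\alpha^*}$ with no fiber in $D$ and fewer than $\wsat(I)$ fibers $I$-positive, you have not yet derived anything false: an ultrafilter extending $I^*$ is only finitely additive, the union of the $I$-null fibers need not be $I$-null (no completeness of $I$ is assumed), so it is perfectly possible for $B\in D$ while every individual fiber is $D$-null and only a handful are $I$-positive. Your phrase ``modulo $I$, $B$ coincides with a union of fewer than $\wsat(I)$ $I$-positive fibers'' is therefore not justified, and the proposed remedy --- ``choose $D$ judiciously from the outset so that it contains one of the $I$-positive fibers'' --- is circular: the fibers are defined from the $<_D$-upper bound $h^*$, which is chosen after $D$, so $D$ cannot be steered toward them in advance. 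You flag this as ``the technical heart of Shelah's argument,'' and it is exactly the part the proposal is missing.

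The paper avoids the impasse by not committing to a single candidate bound (the pointwise supremum truncated to $\prod A$) and a single ultrafilter. Instead it runs an inner induction of length $\wsat(I)$, building a $\leq$-increasing chain of candidate $J_{<\lambda}^I$-upper bounds $g_\alpha$. At a successor stage $\alpha=\beta+1$, if $g_\beta$ fails to be an upper bound it locates a tail index $\xi(\alpha)$ where $B^\beta_\xi=\{a:f_\xi(a)>g_\beta(a)\}$ is $J_{<\lambda}^I$-positive, chooses a \emph{fresh} ultrafilter $D_\alpha\ni B^\beta_{\xi(\alpha)}$ disjoint from $J_{<\lambda}^I$, and uses $\cf(\prod A/D_\alpha)\geq\lambda>\lambda_0$ to escape: it bounds $\vec f$ modulo $D_\alpha$ and folds that bound into $g_\alpha$. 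This forces $B^\alpha_\xi\subsetneq_I B^\beta_\xi$. If the process survives all $\wsat(I)$ stages, then at a common index $\xi(*)$ the sets $B^\alpha_{\xi(*)}\setminus B^{\alpha+1}_{\xi(*)}$ form $\wsat(I)$-many pairwise disjoint $I$-positive sets, contradicting weak saturation. It is precisely this iteration --- one new ultrafilter per stage, with the sets shrinking strictly mod $I$ at each step --- that converts weak saturation into the pigeonhole you want; a single $D$ does not provide enough leverage. Your reduction to the bad set $B\subseteq\{a\in A:a\leq\kappa\}$ and the observation that fewer than $\wsat(I)$ fibers can be $I$-positive are both fine observations, but they need to be embedded in this transfinite escape scheme to yield the contradiction.
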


\begin{proof}
We will show by induction on $\lambda_0 <\lambda$ that $\prod A/J_{<\lambda}^I$ is $\lambda_0^+$ directed. If $F\subseteq\prod A$ is such that $|F|\leq \wsat(I)<\min(A)$, then we let $g$ be defined by $g(a)=\sup\{f(a) : f\in F\}$. Then since each $a\in A$ is regular, it follows that $g\in \prod A$ and $f\leq g$ everywhere.

By way of induction, assume we have shown for some cardinal $\lambda_0$ with $\wsat(I)<\lambda_0<\lambda$, that $\prod A/J_{<\lambda}^I$ is $\lambda_0$-directed, and let $F\subseteq \prod A$ of size $\lambda_0$ be given. We first assume that $\lambda_0$ is singular. In this case, we can write $F=\bigcup_{\alpha<cf(\lambda_0)}F_\alpha$ such that $|F_\alpha|<\lambda_0$. Then by assumption, we can bound each $F_\alpha$ by some $g_\alpha$, and then bound the set $\{ g_\alpha : \alpha<cf(\lambda_0)\}$ by some $g\in \prod A$. We then have that $f\leq g$ modulo $J_{<\lambda}^I$ for each $f\in F$.

So assume that $\lambda_0$ is regular. We begin by replacing $F=\{h_i : i<\lambda_0\}$ with a $\leq_{J_{<\lambda}^I}$-increasing sequence $\vec{f}=\langle f_i : i<\lambda_0\rangle$. We just let $f_i$ be a $\leq_{J_{<\lambda}^I}$-upper bound for $\{h_j : j\leq i\}\cup \{f_j : j<i\}$. By construction, if we can find a $g\in \prod A$ such that $f_i \leq g$ modulo $J_{<\lambda}^I$ for each $i<\lambda_0$, then we will be done. At this point, we will proceed by induction on $\alpha <\wsat(I)$ and attempt to construct a $\leq_{J_{<\lambda}^I}$-increasing sequence of candidates for bounds of $\vec{f}$. As usual, we will show that this construction must terminate at some point, or we will be able to generate a contradiction.

By induction on $\alpha <\wsat(I)$, we will define functions $g_\alpha$, ordinals $\xi(\alpha)$, and sequences $\langle B^\alpha_\xi : \xi <\lambda_0\rangle$ with the following properties:

\begin{enumerate}
\item $g_\alpha\in \prod A$ and for all $\beta<\alpha$, we have that $g_\beta\leq g_\alpha$;
\item $B_\xi^\alpha:=\{a\in A : f_\xi(a)>g_\alpha (a)\}$;
\item For each $\alpha <\wsat(I)$, and every $\xi\in[\xi(\alpha+1),\lambda_0)$, we have that $B^\alpha_\xi\neq B^{\alpha+1}_\xi$ modulo $J_{<\lambda}^I$.
\end{enumerate}

The construction proceeds as follows. At stage $\alpha=0$, we simply let $g_0=f_0$, and set $\xi(\alpha)=0$ (note that $\xi(\alpha)$ only matters when $\alpha=\beta+1$ for some ordinal $\beta<\wsat(I)$). At limit stages, assume that $g_\beta$ has been defined for each $\beta<\alpha$, and define $g_\alpha$ by setting $g_\alpha(a)=\sup_{\beta<\alpha} g_\beta(a)$. Note since $\alpha<\wsat(I)<\min(A)$ and each $a\in A$ is regular, that $g_\alpha\in\prod A$.

At successor stages, let $\alpha=\beta+1$, and suppose that $g_\beta$ has been defined. If $g_\beta$ is a $\leq_{J_{<\lambda}^I}$-upper bound for $\vec{f}$, then we're done and we can terminate the induction. Otherwise, note that the sequence $\langle B^\beta_\xi : \xi <\lambda_0\rangle$ is $\subseteq_{J_{<\lambda}^I}$-increasing and so there is a minimum $\xi(\alpha)$ for which every $\xi\in[\xi(\alpha),\lambda_0)$ has the property that $B_\xi^\beta\notin J_{<\lambda}^I$ (else if there is no such $\xi(\alpha)$, then $g_\beta$ was indeed the desired bound). By definition, that means we can find some ultrafilter $D$, disjoint from $J_{<\lambda}^I$ such that $B_{\xi(\alpha)}^\beta\in D$ and $\cf(\prod A/D)\geq\lambda$. Thus it follows that $\vec{f}$ must have a $<_D$-upper bound in $\prod A$, say $\hat{g}_\alpha$. We then define $g_\alpha\in \prod A$ by $g_\alpha (a)=\max\{g_\beta(a), \hat{g}_\alpha(a)\}$.

Note that for each $\xi\in[\xi(\beta+1),\lambda_0)$, we have that $B_\xi^\beta\in D$. On the other hand, our definition of $g_\alpha$ gives us that $B_\xi^{\beta+1}\notin D$ since $g_\alpha$ is at least $\hat{g}_\alpha$ everywhere. Thus, condition 3) is satisfied, as are 1) and 2) trivially by construction.

We claim that this process must have terminated at some stage. Otherwise, we let $\xi(*)=\sup\{\xi(\alpha) : \alpha<\wsat(I)\}$, and note that each $B^\alpha_{\xi(*)}\notin J_{<\lambda}^I$ since the induction never terminated. Next, we note that conditions 1) and 3) give us that for $\alpha\leq \beta$, we have $B_{\xi(*)}^\beta\subseteq B_{\xi(*)}^\alpha$ and so $B_{\xi(*)}^\alpha\setminus B_{\xi(*)}^{\alpha+1}\notin J_{<\lambda}^I$. Therefore, for $\alpha<\beta$, we have that $B_{\xi(*)}^\beta\subseteq B_{\xi(*)}^{\alpha+1}$ and so the sets $B_{\xi(*)}^\alpha\setminus B_{\xi(*)}^{\alpha+1}$ and $B_{\xi(*)}^\beta\setminus B_{\xi(*)}^{\beta+1}$ are disjoint $I$-positive sets (since $J_{<\lambda}^I$ extends $I$). But then we have a partition $\{B_{\xi(*)}^\alpha\setminus B_{\xi(*)}^{\alpha+1} : \alpha < \wsat(I)\} $ of A into $\wsat(I)$-many disjoint $I$-positive sets, which is a contradiction. Therefore the process terminated at some point and $\vec{f}$ (hence $F$) has a $\leq_{J_{<\lambda}^I}$-upper bound. This completes the induction and the proof.

\end{proof}

Throughout the remainder of this section, we fix a set of regular cardinals $A$ with no maximum, and an ideal $I$. In line with the notation of \cite{AbMag}, we will isolate the additional hypothesis of \cref{lemma: directedness}.

\begin{definition}
We say that $A$ is weakly progressive over $I$ if $\wsat(I)<\min(A)$. We say that $A$ is progressive over $I$ if additionally, $\reg(I)<\min (A)$.
\end{definition}

From $\lambda$-directedness, we immediately recover the following facts. Note that the proofs of the following two corollaries only utilize $\lambda$-directedness, and can be found in \cite{AbMag} as Corollary 3.5 and Corollary 3.7 respectively.

\begin{corollary}
If $A$ is weakly progressive over $I$, then for every ultrafilter $D$ over A disjoint from $I$, $\cf(\prod A/D)\geq\lambda$ if and only if $J_{<\lambda}^I[A]\cap D=\emptyset$.
\end{corollary}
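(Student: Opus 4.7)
The plan is to prove the two implications separately. The forward direction follows almost immediately from the definition of $J_{<\lambda}^I[A]$, while the reverse direction reduces to producing, for any subset of $\prod A$ of size less than $\lambda$, a common $\leq_D$-upper bound.

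For the forward direction I would suppose $\cf(\prod A/D) \geq \lambda$ and argue by contradiction. If some $B \in J_{<\lambda}^I[A] \cap D$ existed, then since $D$ is disjoint from $I$ we could not have $B \in I$, so the second clause of the definition of $J_{<\lambda}^I[A]$ must apply. Taking the witnessing ultrafilter to be $D$ itself --- which contains $B$ and is disjoint from $I$ --- would force $\cf(\prod A/D) < \lambda$, contradicting the hypothesis.

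For the reverse direction, first note that $J_{<\lambda}^I[A] \cap D = \emptyset$ automatically forces $J_{<\lambda}^I[A]$ to be proper, since otherwise $A$ itself would belong to both sides. It then suffices to show that any $F \subseteq \prod A$ with $|F| < \lambda$ has a common $\leq_D$-upper bound $g$: since $\prod A/D$ is linearly ordered (and has no maximum, because $A$ has no maximum), this bounding property forces $\cf(\prod A/D) \geq \lambda$. To produce such a $g$, I split on the size of $\lambda$: if $\lambda \geq \wsat(I)$, apply \cref{lemma: directedness} to get a $\leq_{J_{<\lambda}^I}$-upper bound of $F$; otherwise $|F| < \lambda < \wsat(I) < \min(A)$, and we may simply take the pointwise supremum, which lies in $\prod A$ because each $a \in A$ is regular. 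Either way, for every $f \in F$ the set $\{a \in A : f(a) > g(a)\}$ lies in $J_{<\lambda}^I[A]$; by hypothesis this set is not in $D$, and since $D$ is an ultrafilter its complement must belong to $D$, giving $f \leq_D g$.

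The main thing to keep track of is the split into the two cases $\lambda \geq \wsat(I)$ and $\lambda < \wsat(I)$, but both reduce to the same template: bound $F$ modulo the smaller ideal (trivially in the latter case, via \cref{lemma: directedness} in the former) and then transfer the bound to $D$ using disjointness and the ultrafilter property. No substantial obstacle is anticipated beyond this bookkeeping.
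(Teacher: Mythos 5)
Your proof is correct and follows the same route the paper intends: the paper defers to Corollary 3.5 of Abraham--Magidor and notes the argument ``only utilizes $\lambda$-directedness,'' which is precisely what you do via \cref{lemma: directedness}. The forward direction is a direct unwinding of the definition of $J_{<\lambda}^I[A]$ applied to $D$ itself, and the reverse direction correctly reduces to bounding small subsets of $\prod A$ modulo $D$ and then transfers a $\leq_{J_{<\lambda}^I}$-bound to a $\leq_D$-bound using the disjointness hypothesis and the ultrafilter property.

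Two small remarks, neither of which is a gap. First, your parenthetical justification that $\prod A/D$ has no maximum ``because $A$ has no maximum'' is not quite the right reason: $\prod A/D$ never has a maximum simply because each $a\in A$ is a regular (hence limit) cardinal, so $g+1\in\prod A$ and $g<g+1$ everywhere for any $g\in\prod A$; this is independent of whether $A$ itself has a largest element. Second, the case $\lambda<\wsat(I)$ that you handle by taking pointwise suprema is in fact degenerate: when $\lambda<\wsat(I)<\min(A)$, pointwise suprema already show $\cf(\prod A/D)\geq\min(A)>\lambda$ for every $D$ disjoint from $I$, which in turn means the second clause in the definition of $J_{<\lambda}^I[A]$ is vacuous and $J_{<\lambda}^I[A]=I$, so both sides of the equivalence hold automatically. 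Your treatment reaches the same conclusion and is fine as bookkeeping.
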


\begin{corollary}\label{corollary: maxpcf}
If $A$ is weakly progressive over $I$, then $\max\pcf_I(A)$ exists.
\end{corollary}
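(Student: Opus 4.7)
The plan is to argue by contradiction. Suppose $\pcf_I(A)$ has no maximum and set $\lambda^* = \sup \pcf_I(A)$. First note that $\lambda^*$ is a cardinal bounded above by $|\prod A|^+$, since $\cf(\prod A/D) \leq |\prod A|$ for every ultrafilter $D$. Moreover $\lambda^*$ must be a limit cardinal: if $\lambda^* = \kappa^+$, then every element of $\pcf_I(A)$ would be strictly less than $\kappa^+$ and hence at most $\kappa$, forcing $\sup \pcf_I(A) \leq \kappa < \lambda^*$, which is absurd.

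The key move is to form the ideal $J^* = \bigcup_{\lambda < \lambda^*} J_{<\lambda}^I[A]$, which is an ideal on $A$ as a directed union of ideals and visibly extends $I$. I claim $J^*$ is proper. For if $A \in J^*$, then $A \in J_{<\lambda_0}^I[A]$ for some $\lambda_0 < \lambda^*$, and by the definition of that ideal this forces $\pcf_I(A) \subseteq \lambda_0$, contradicting $\sup \pcf_I(A) = \lambda^* > \lambda_0$. Now extend the dual filter of $J^*$ to an ultrafilter $D$ over $A$. Since $I \subseteq J^*$, this $D$ is disjoint from $I$, so $\cf(\prod A/D)$ belongs to $\pcf_I(A)$ and in particular is strictly less than $\lambda^*$ by our standing assumption of no maximum.

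On the other hand, for every $\lambda < \lambda^*$ we have $D \cap J_{<\lambda}^I[A] \subseteq D \cap J^* = \emptyset$, so the preceding corollary gives $\cf(\prod A/D) \geq \lambda$. Taking the supremum over $\lambda < \lambda^*$ yields $\cf(\prod A/D) \geq \lambda^*$, contradicting the previous paragraph. The one delicate point is the properness of $J^*$ itself: it is tempting to try to identify $J^*$ with $J_{<\lambda^*}^I[A]$, but under the assumption of no maximum the latter contains $A$ and is therefore improper, so the argument must work with the directed union and exploit only that each $J_{<\lambda}^I[A]$ for $\lambda < \lambda^*$ is proper. Everything else is essentially bookkeeping via the preceding corollary.
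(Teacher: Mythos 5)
Your argument is correct and is essentially the standard proof that the paper defers to (Corollary 3.7 of Abraham--Magidor): form the proper ideal $J^* = \bigcup_{\lambda<\lambda^*} J_{<\lambda}^I[A]$, extend its dual filter to an ultrafilter $D$ disjoint from $I$, and use the preceding corollary to force $\cf(\prod A/D) \geq \lambda$ for every $\lambda < \lambda^*$, contradicting $\cf(\prod A/D) \in \pcf_I(A) \subseteq \lambda^*$. Your remark about why one must work with the directed union rather than $J_{<\lambda^*}^I[A]$ itself is the right subtlety to flag.
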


As we are aiming to obtain generators using the trichotomy theorem, our next natural step is to show that we can get universal cofinal sequences.

\begin{definition}
Suppose that $\lambda\in\pcf_I(A)$. A sequence $\vec f=\langle f_\xi : \xi<\lambda\rangle$ of functions in $\prod A$ is a universal cofinal sequence for $\lambda$ if and only if

\begin{enumerate}
\item $\vec f$ is $<_{J_{<\lambda}^I}$-increasing.
\item For every ultrafilter $D$ over $A$ disjoint from $I$ such that $\lambda=\cf(\prod A/D)$, $\vec{f}$ is cofinal in $\prod A/D$.
\end{enumerate}
\end{definition}

\begin{theorem}\label{thm: universal sequences}
If $A$ is weakly progressive over $I$, then every $\lambda\in \pcf_I(A)$ has a universal cofinal sequence.
\end{theorem}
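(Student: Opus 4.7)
The plan is to leverage the $\lambda$-directedness of $\prod A/J_{<\lambda}^I[A]$ granted by Lemma \ref{lemma: directedness} under the weak progressivity assumption, and build the universal cofinal sequence by bootstrapping from a single cofinal sequence for a chosen ultrafilter realizing $\cf(\prod A/\cdot)=\lambda$.

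Set $J:=J_{<\lambda}^I[A]$; by \cref{lem: properties}(1) this is a proper ideal, and since $\wsat(J)\leq\wsat(I)<\min A\leq\lambda$, Lemma \ref{lemma: directedness} yields that $\prod A/J$ is $\lambda$-directed. By the definition of $\pcf_I(A)$, I fix an ultrafilter $D_0$ on $A$ disjoint from $I$ with $\cf(\prod A/D_0)=\lambda$, together with a $<_{D_0}$-increasing cofinal sequence $\langle h_\xi:\xi<\lambda\rangle$ in $\prod A/D_0$. I then build $\vec f=\langle f_\xi:\xi<\lambda\rangle$ by recursion: at each stage $\xi<\lambda$, use $\lambda$-directedness on the set $\{f_\eta:\eta<\xi\}\cup\{h_\xi\}$ (which has cardinality $<\lambda$) to pick a strict $<_J$-upper bound $f_\xi\in\prod A$. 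The resulting sequence is $<_J$-increasing and satisfies $h_\xi\leq_J f_\xi$ for each $\xi$, so condition (1) of the definition holds.

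For condition (2), fix any ultrafilter $D$ disjoint from $I$ with $\cf(\prod A/D)=\lambda$; by the corollary above, $D\cap J=\emptyset$, so $\vec f$ is $<_D$-increasing. The case $D=D_0$ is immediate from the dominance $h_\xi\leq_{D_0} f_\xi$ and the $<_{D_0}$-cofinality of $\vec h$. For the general case, suppose for contradiction that $\vec f$ is $<_D$-bounded by some $g\in\prod A$. The sets $C_\xi:=\{a:g(a)>f_\xi(a)\}$ then lie in $D$, are $J$-positive, and form a $\subseteq$-decreasing family modulo $J$. Extending $J^*\cup\{C_\xi:\xi<\lambda\}$ (which has the finite intersection property) to an ultrafilter $D^*$ disjoint from $J$ shows that $\vec f$, and hence $\vec h$ via $\vec h\leq_J\vec f$, is $<_{D^*}$-bounded by $g$, and the aim is to derive a contradiction from this configuration.

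The main obstacle I anticipate is precisely this final contradiction. The extension to $D^*$ on its own is not enough, since $\cf(\prod A/D^*)$ may legitimately exceed $\lambda$. To force the contradiction I would refine the recursive construction of $\vec f$ so that $f_\xi$ simultaneously diagonalizes against a bookkeeping-enumerated family of potential bound-witnesses, much in the spirit of Shelah's argument in \cite{Sh506}; the delicate point is to carry out the diagonalization using only the weak progressivity hypothesis, without invoking the stronger progressivity needed for \cref{thm: good sequences}.
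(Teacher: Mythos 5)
There is a genuine gap, and you have correctly located it yourself: the construction as given only produces a sequence cofinal modulo the single chosen ultrafilter $D_0$, and the attempt to upgrade this to universality does not go through. A $<_J$-increasing sequence that dominates one cofinal sequence for one $D_0$ has no reason to be cofinal modulo a different ultrafilter $D$ with $\cf(\prod A/D)=\lambda$; the $D^*$ you extract from $J^*\cup\{C_\xi:\xi<\lambda\}$ merely witnesses that $\vec f$ is bounded somewhere, and since $\cf(\prod A/D^*)$ can legitimately exceed $\lambda$, no contradiction arises. Moreover the ``bookkeeping diagonalization'' you gesture at is not available in this generality: the ultrafilters on $A$ disjoint from $I$ with cofinality exactly $\lambda$ can number $2^{2^{|A|}}$, which may vastly exceed $\lambda$, so a $\lambda$-length recursion cannot enumerate them and diagonalize against each.

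The idea you are missing is that one should not try to make a single pass produce a universal sequence; rather, one should iterate the ``absorb a witness of failure'' step transfinitely, indexed by $\alpha<\wsat(I)$, and use weak saturation to show that this process must halt. Concretely: given a candidate $\vec f^\alpha$ that is not universal, pick an ultrafilter $D_\alpha$ with $\cf(\prod A/D_\alpha)=\lambda$ for which $\vec f^\alpha$ is $<_{D_\alpha}$-bounded, and build $\vec f^{\alpha+1}$ so that it is pointwise $\geq\vec f^\alpha$, still $<_J$-increasing, and cofinal modulo $D_\alpha$; take pointwise sups at limits, using $\wsat(I)<\min A$ to keep the functions in $\prod A$. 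If this never terminates, let $h=\sup_{\alpha<\wsat(I)}f^\alpha_0$ and choose $\xi(*)<\lambda$ large enough that $h<_{D_\alpha}f^{\alpha+1}_{\xi(*)}$ for all $\alpha$; then the sets $B_\alpha=\{a:h(a)\leq f^\alpha_{\xi(*)}(a)\}$ form a $\subseteq$-increasing chain with $B_{\alpha+1}\setminus B_\alpha$ $I$-positive for every $\alpha$ (since $B_\alpha\notin D_\alpha$ but $B_{\alpha+1}\in D_\alpha$), and the successive differences give a partition of $A$ into $\wsat(I)$-many disjoint $I$-positive sets, contradicting weak saturation. This is exactly the combinatorial mechanism that replaces a diagonalization over all ultrafilters, and it is the reason $\wsat(I)$ rather than the number of ultrafilters is the relevant bound.
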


\begin{proof}

The proof of this will be very similar to the proof of \cref{lemma: directedness}, insofar as we will proceed by induction on $\alpha<\wsat(I)$, and suppose that we fail to get a universal cofinal sequence at each stage. From this we will be able to produce a contradiction to weak saturation.

We will proceed by induction on $\alpha<\wsat(I)$, and construct candidate universal sequences $\vec{f}^\alpha=\langle f_\xi^\alpha : \xi<\lambda\rangle$. Now, we want to come up with sets $B^\alpha_\xi$ that are $\subseteq$-increasing in the $\alpha$ coordinate but differ from each other modulo $J_{<\lambda}^I$ (and hence $I$). So we will ask that not only is the collection $\langle f^\alpha_\xi : \alpha<\wsat(I),~ \xi<\lambda\rangle$ strictly increasing modulo $J_{<\lambda}^I$ in the $\xi$ coordinate, but that it is $\leq$-increasing in the $\alpha$ coordinate. With that in mind, we will use $\lambda$-directedness to inductively construct these sequences.

At stage $\alpha=0$, we let $\vec{f}^0=\langle f^0_\xi : \xi<\lambda\rangle$ be any $<_{J_{<\lambda}^I}$-increasing sequence in $\prod A$. We can create such a sequence inductively as follows: let $f^0_0$ be arbitrary, and then assume that $f^0_\eta$ has been defined for $\eta<\xi$. By $\lambda$-directedness, we can find $g\in\prod A$ such that $f_\eta^0\leq_{J_{<\lambda}^I} g$ for all $\eta<\xi$, and let $f^0_\xi=g+1$.

At limit stages, let $\gamma<\lambda$ and assume that $\vec{f}^\alpha$ has been defined for each $\alpha<\gamma$. We inductively define $\vec{f}^\gamma=\langle f^\gamma_\xi : \xi<\lambda\rangle$ as follows: let $f^\gamma_0=\sup \{f^\alpha _0 : \alpha<\gamma\}$, which is in $\prod A$ since $\gamma<\wsat(I)<\min(A)$. Now suppose that $f^\gamma_\eta$ has been defined for each $\eta<\xi$, and let $g=\sup \{f^\alpha_\eta : \alpha<\gamma\}$. Again $g\in \prod A$, and let $h$ be such that $f^\gamma_\eta\leq_{J_{<\lambda}^I} h$ for all $\eta<\xi$ by $\lambda$-directedness. Then define $f^\gamma_\xi$ by $f^\gamma_\xi(a)=\max\{g(a),h(a)\}+1$, which is as desired.

At successor stages suppose that $\vec{f}^\alpha$ has been defined. If $\vec{f}^\alpha$ is a universal sequence, then we can terminate the induction. If not, we inductively define $\vec{f}^{\alpha+1}=\langle f^{\alpha+1}_\xi: \xi<\lambda\rangle$ as follows: Since $\vec{f}^\alpha$ is not universal, we can find an ultrafilter $D_\alpha$ over $A$ with the property that $\cf(\prod A/D_\alpha)=\lambda$, but $\vec{f}^\alpha$ is $<_{D_\alpha}$-dominated by some $h\in\prod A/D_\alpha$ (note that $D_\alpha$ is disjoint from $J_{<\lambda}^I$). Let $\vec{g}=\langle g_\xi : \xi<\lambda\rangle$ be a $<_{D_\alpha}$-increasing, cofinal sequence in $\prod A/D_\alpha$. We define $f^{\alpha+1}_0$ by setting $f^{\alpha+1}_0(a)=\max\{h(a), f^{\alpha}_0(a), g_0(a)\}$. Now suppose that $f^{\alpha+1}_\eta$ has been defined for each $\eta<\xi$, and let $\hat{f}$ be such that $f^{\alpha+1}_\eta\leq_{J_{<\lambda}^I} \hat{f}$ for all $\eta<\xi$ by $\lambda$-directedness. Then define $f^{\alpha+1}_\xi$ by $f^{\alpha+1}_\xi(a)=\max\{f^\alpha_\xi(a),\hat{f}(a), g_\xi(a)\}+1$, which is as desired. Note that $\vec{f}^{\alpha+1}$ is cofinal in $\prod A/D_\alpha$

We claim that we must have terminated the induction at some stage. Otherwise, we will have defined for each $\alpha<\wsat(I)$ the following:

\begin{enumerate}
\item Sequences $\vec{f}^\alpha=\langle f^\alpha_\xi : \xi<\lambda\rangle$ which are $J_{<\lambda}^I$-increasing in the $\xi$ coordinate, and $\leq$-increasing in the $\alpha$ coordinate.
\item Ultrafilters $D_\alpha$ disjoint from $J_{<\lambda}^I$ such that $\vec{f}^\alpha$ is $<_{D_\alpha}$ dominated by $f^{\alpha +1}_0$, and $\vec{f}^{\alpha+1}$ is cofinal in $\prod A/D_\alpha$.
\end{enumerate}

We will use this to derive a contradiction. We begin by letting $h\in\prod A$ be defined by setting $h(a)=\sup\{f_0^\alpha (a) : \alpha<\wsat(I)\}$ (recall that $\wsat(I)<\min(A)$). By condition 2) above, for every $\alpha<\wsat(I)$, there exists an index $\xi(\alpha)<\lambda$ such that $h <_{D_\alpha} f_{\xi(\alpha)}^{\alpha +1}$. Since $\wsat(I)<\min(A)\leq\lambda$ for $\lambda$ regular, it follows that $\xi(*)=\sup\{\xi(\alpha) : \alpha<\wsat(I)\}$ is below $\lambda$. So, for each $\alpha<\wsat(I)$, we have that $h<_{D_\alpha} f_{\xi(*)}^{\alpha+1}$. Now define the sets
\begin{equation*}
B_\alpha =\{a\in A : h(a) \leq f^\alpha_{\xi(*)}(a)\}.
\end{equation*}

By construction, we have that $B_\alpha\notin D_\alpha$ since $f_{\xi(*)}^\alpha <_{D_\alpha} f_0^{\alpha+1}\leq h$. On the other hand, $B_{\alpha+1}\in D_\alpha$ since $h<_{D_\alpha} f^{\alpha+1}_{\xi(*)}$. So, it follows that $B_\alpha\neq B_{\alpha+1}$ modulo $J_{<\lambda}^I$ (hence modulo $I$). But since $f^\alpha_{\xi(*)}\leq f^{\alpha+1}_{\xi(*)}$, we have that $B_\alpha\subseteq B_{\alpha+1}$ (in fact $\beta<\alpha$ implies that $B_\beta\subseteq B_\alpha$) and so we are in the same position as the proof of \cref{lemma: directedness}. That is, $\langle B_{\alpha+1}\setminus B_{\alpha} : \alpha<\wsat(I)\rangle$ is a collection of $I$-positive sets which are disjoint, contradicting weak saturation. Therefore, the induction must have halted at some stage and we are done.

\end{proof}

Now that we have universal cofinal sequences, we can recover the following corollary by repeating the standard arguments (Theorem 4.4 from \cite{AbMag}).

\begin{corollary}\label{corollary: cof}
If $A$ is weakly progressive over $I$, then $cf(\prod A/I)=\max\pcf_I(A)$.
\end{corollary}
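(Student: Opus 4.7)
Let $\lambda := \max\pcf_I(A)$, which exists by \cref{corollary: maxpcf}. The lower bound $\cf(\prod A/I) \geq \lambda$ is immediate: apply \cref{corollary: maxpcf} to fix an ultrafilter $D$ over $A$ disjoint from $I$ with $\cf(\prod A/D) = \lambda$, and observe that since $D \supseteq I^*$, every $<_I$-cofinal family in $\prod A$ is automatically $<_D$-cofinal, yielding $\cf(\prod A/I) \geq \cf(\prod A/D) = \lambda$.

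For the upper bound $\cf(\prod A/I) \leq \lambda$, I would apply \cref{thm: universal sequences} to fix a universal cofinal sequence $\vec f = \langle f_\xi : \xi < \lambda\rangle$ for $\lambda$, and argue that $\vec f$ is $<_I$-cofinal in $\prod A$. Suppose toward contradiction that some $h \in \prod A$ has $h \not\leq_I f_\xi$ for every $\xi$, and set $B_\xi = \{a \in A : h(a) > f_\xi(a)\}$; each $B_\xi$ is $I$-positive. Since $\vec f$ is $<_{J_{<\lambda}^I}$-increasing, the family $\langle B_\xi : \xi<\lambda\rangle$ is $\subseteq$-decreasing modulo $J_{<\lambda}^I$, and in particular $I^* \cup \{B_\xi : \xi<\lambda\}$ has the finite intersection property. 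Extending it to an ultrafilter $D$ that is disjoint from $J_{<\lambda}^I$ (not merely from $I$) would make $\vec f$ a genuine $<_D$-increasing sequence of length $\lambda$; the corollary preceding \cref{corollary: maxpcf} then yields $\cf(\prod A/D) \geq \lambda$, and maximality of $\lambda$ forces $\cf(\prod A/D) = \lambda$. Universality of $\vec f$ then makes it $<_D$-cofinal, contradicting $h >_D f_\xi$ for every $\xi$ (since each $B_\xi \in D$).

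The principal obstacle is precisely arranging $D \cap J_{<\lambda}^I = \emptyset$ rather than just $D \cap I = \emptyset$: \emph{a priori} some $B_{\xi_0}$ might lie in $J_{<\lambda}^I \setminus I$. Following the template of \cite{AbMag}, one handles this by inducting on $\max\pcf_I(A)$. If such a $B_{\xi_0}$ exists, then the $<_{J_{<\lambda}^I}$-monotonicity of $\vec f$ forces $h \leq f_{\xi_0}$ pointwise on $A \setminus B_{\xi_0}$, while $B_{\xi_0}$ remains weakly progressive over $I_{B_{\xi_0}}$ with $\max\pcf_{I_{B_{\xi_0}}}(B_{\xi_0}) < \lambda$ by \cref{lem: properties}(4) together with the definition of $J_{<\lambda}^I$. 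The inductive hypothesis then furnishes a cofinal family of size $<\lambda$ in $\prod B_{\xi_0}/I_{B_{\xi_0}}$, and patching any bound for $h \upharpoonright B_{\xi_0}$ drawn from this family with $f_{\xi_0}$ off $B_{\xi_0}$ produces an $<_I$-bound for $h$ living in a cofinal family of total size $\lambda$; this both completes the proof of the upper bound and resolves the ambiguity about which $h$'s were not handled directly by $\vec f$.
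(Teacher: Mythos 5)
Your lower bound is correct, and your upper-bound strategy --- fix a universal cofinal sequence $\vec f$ for $\lambda = \max\pcf_I(A)$, use universality to obtain $J_{<\lambda}^I$-domination of an arbitrary $h$, and recurse on $\max\pcf$ over the residue set --- is the right one (the paper itself simply defers to Theorem 4.4 of \cite{AbMag}, which runs along these lines). But there is a genuine gap in the final step. As written, the patching happens per $h$: for each $h$ not handled directly by $\vec f$, you locate a set $B_{\xi_0}\in J_{<\lambda}^I\setminus I$ \emph{depending on $h$}, invoke the inductive hypothesis to obtain a cofinal family of size $<\lambda$ in $\prod B_{\xi_0}/I_{B_{\xi_0}}$, and patch with $f_{\xi_0}$. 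The collection of sets that can arise as $B_{\xi_0}$ as $h$ ranges over $\prod A$ has size up to $2^{|A|}$, which in the merely weakly progressive setting can well exceed $\lambda$, and nothing in your argument guarantees that the per-$h$ patches can all be drawn from a single pre-chosen family of size $\leq\lambda$. The assertion that these bounds ``live in a cofinal family of total size $\lambda$'' is exactly the content of the upper bound and is left unjustified.

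The standard repair is to name the family in advance and strengthen the inductive statement. For each $\mu\in\pcf_I(A)$ fix a universal cofinal sequence $\vec f^\mu$, and let $F$ be the closure of $\{f^\mu_\alpha : \mu\in\pcf_I(A),\ \alpha<\mu\}$ under finite pointwise maxima; since $\pcf_I(A)\subseteq[\min(A),\lambda]\cap\mathrm{REG}$ one has $|F|\leq\lambda$. One then proves, by induction on $\max\pcf_{I_B}(B)$, that $F\!\upharpoonright\! B=\{f\!\upharpoonright\! B : f\in F\}$ is $\leq_{I_B}$-cofinal in $\prod B$ for every $I$-positive $B\subseteq A$. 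The fact that makes this go through --- and which your write-up would need to supply --- is a coherence lemma: if $B$ is $I$-positive and $\mu\in\pcf_{I_B}(B)$, then $\vec f^\mu\!\upharpoonright\! B$ is again a universal cofinal sequence for $\mu$ over $B$, so the bounds furnished by the inductive hypothesis on $B_{\xi_0}$ really do land in $F\!\upharpoonright\! B_{\xi_0}$ rather than in some unrelated family. A smaller issue: your claim that $I^*\cup\{B_\xi : \xi<\lambda\}$ has the finite intersection property because $\langle B_\xi\rangle$ is $\subseteq$-decreasing modulo $J_{<\lambda}^I$ does not follow; the errors $B_\eta\setminus B_\xi$ (for $\xi<\eta$) are only controlled modulo $J_{<\lambda}^I$, not modulo $I$. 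What you actually need, and what does hold when every $B_\xi$ is $J_{<\lambda}^I$-positive, is that $(J_{<\lambda}^I)^*\cup\{B_\xi : \xi<\lambda\}$ has the finite intersection property; the failure of $J_{<\lambda}^I$-positivity at some $\xi_0$ is precisely what triggers the recursion.
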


\begin{definition}
Let $\lambda\in \pcf_I(A)$. We say that $B$ is a generator of $J_{<\lambda^+}^I[A]$ (written $J_{<\lambda^+}^I[A]=J_{<\lambda}^I[A]+B$) if the ideal $J_{<\lambda^+}^I[A]$ is generated by $J_{<\lambda}^I[A]\cup \{B\}$.
\end{definition}

The pcf theorem (in the classical theory) is the statement that, for every $\lambda\in \pcf(A)$, we can find a generator. We now show how to extract generators from universal cofinal sequences under the appropriate conditions. As the proof of the following lemma can be recovered by repeating the standard arguments (as found in the beginning of the proof of e.g. Theorem 4.8 of \cite{AbMag}), we omit said proof.

\begin{lemma}\label{lemma: weak to pcf}
Suppose that $\lambda\in \pcf_I(A)$ has a universal cofinal sequence $\vec{f}=\langle f_\xi : \xi<\lambda\rangle$ with an exact upper bound $f$. Then the set $B=\{a\in A : f(a)=a\}$ is a generator for $J_{<\lambda^+}^I[A]$.
\end{lemma}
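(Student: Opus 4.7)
The plan is to verify the equality $J_{<\lambda^+}^I[A] = J_{<\lambda}^I[A] + B$, which breaks into two inclusions: (a) $B \in J_{<\lambda^+}^I[A]$, and (b) for every $C \in J_{<\lambda^+}^I[A]$, $C \setminus B \in J_{<\lambda}^I[A]$. A convenient preliminary is to assume $f(a) \leq a$ for every $a \in A$; indeed, replacing $f$ by $a \mapsto \min(f(a), a)$ still yields an exact upper bound (each $f_\xi \in \prod A$ lies pointwise below the identity on $A$, and least-upper-boundness forces $\{a : f(a) > a\} \in I$), and this alters $B$ only by elements of $I$, which is harmless for the generator property.

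For (a), I would fix an arbitrary ultrafilter $D$ over $A$ disjoint from $I$ with $B \in D$ and argue that $\vec f$ is $<_D$-cofinal in $\prod A / D$, which immediately forces $\cf(\prod A / D) \leq \lambda$. The key observation is that on $B \in D$ every $g \in \prod A$ satisfies $g(a) < a = f(a)$, so truncating $g$ below $f$ globally (setting $g'(a) = g(a)$ if $g(a) < f(a)$ and $g'(a) = 0$ otherwise) yields $g'$ with $g' < f$ pointwise (hence $g' <_I f$) and $g' = g$ on $B$. Exactness of $f$ then produces some $\xi < \lambda$ with $g' <_I f_\xi$, and passing to $D$ gives $g <_D f_\xi$.

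For (b), I would argue by contradiction: suppose $C \in J_{<\lambda^+}^I[A]$ but $C \setminus B \notin J_{<\lambda}^I[A]$. Then some ultrafilter $D$ disjoint from $I$ contains $C \setminus B$ with $\cf(\prod A / D) \geq \lambda$; combined with $C \in D \cap J_{<\lambda^+}^I[A]$, this forces the cofinality to be exactly $\lambda$, and universality of $\vec f$ then makes it $<_D$-cofinal in $\prod A / D$. However, $A \setminus B \in D$ and $f(a) < a$ there, and since each $a$ is an infinite regular cardinal (a limit ordinal), $f(a) + 1 < a$; setting $\tilde f(a) = f(a) + 1$ on $A \setminus B$ and $\tilde f(a) = 0$ on $B$ produces a member of $\prod A$ that $<_D$-dominates every $f_\xi$, contradicting cofinality.

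The main obstacle is (b), where I rely crucially on $\vec f$ being \emph{universal} rather than merely cofinal under a single chosen ultrafilter: without universality, the adversarial $D$ could have a totally unrelated cofinal sequence in $\prod A / D$, and the $\tilde f$ construction would have nothing to push against. This is precisely why \cref{thm: universal sequences} is the essential ingredient leading into this lemma.
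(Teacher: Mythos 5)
Your proof is correct and is exactly the standard argument from Abraham--Magidor to which the paper defers in lieu of a proof: normalize $f$ below the identity, use exactness of $f$ to cap $\cf(\prod A/D)$ at $\lambda$ for any $D$ disjoint from $I$ concentrating on $B$, and use universality together with the gap $f+1<\mathrm{id}_A$ off $B$ to show that $C\setminus B\notin J^I_{<\lambda}$ forces $C\notin J^I_{<\lambda^+}$. One pedantic remark: the truncation $g'$ is not strictly below $f$ at coordinates where $f(a)=0$, so to invoke exactness one really wants $\{a\in A : f(a)=0\}\in I$; this is automatic in the paper's application of the lemma (\cref{thm: pcf theorem}), where the eub is produced via $\mathrm{Good}_\theta$ and hence $\cf(f(a))\geq\theta$ for $I$-almost every $a$.
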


The following is immediate.

\begin{theorem}[The pcf Theorem]\label{thm: pcf theorem}
If $A$ is progressive over $I$, then for every $\lambda\in pcf_I(A)$, there exists a $B_\lambda\subseteq A$ such that $J_{<\lambda^+}^I[A]=J_{<\lambda}^I[A]+B_\lambda$.
\end{theorem}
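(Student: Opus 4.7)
The plan is to chain together the preceding machinery: \cref{thm: universal sequences} yields a universal cofinal sequence for $\lambda$, \cref{thm: good sequences} upgrades that sequence to one with an exact upper bound, and \cref{lemma: weak to pcf} then extracts the desired generator.

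Fix $\lambda\in\pcf_I(A)$. First, apply \cref{thm: universal sequences} to obtain a universal cofinal sequence $\vec f=\langle f_\xi : \xi<\lambda\rangle$ in $\prod A$, which is $<_{J_{<\lambda}^I}$-increasing. To arrange an exact upper bound, I would then apply \cref{thm: good sequences} with the ideal $I$ replaced by $J_{<\lambda}^I$. The hypotheses transfer: $\reg(J_{<\lambda}^I)\leq \reg(I)<\min(A)\leq\lambda$, where the monotonicity $\reg(J_{<\lambda}^I)\leq \reg(I)$ follows immediately from condition (3) of \cref{lem: reg} (every $J_{<\lambda}^I$-positive set is $I$-positive, since $I\subseteq J_{<\lambda}^I$), and $\prod A/J_{<\lambda}^I$ is $\lambda$-directed by \cref{lemma: directedness}. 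This produces a sequence $\vec g=\langle g_\xi : \xi<\lambda\rangle$ with $f_\xi<g_{\xi+1}$ pointwise, and which is $\mathrm{Good}_\theta$ modulo $J_{<\lambda}^I$ for an appropriate regular $\theta$; in particular $\vec g$ has an exact upper bound $f$ modulo $J_{<\lambda}^I$.

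Next I would verify that $\vec g$ remains a universal cofinal sequence. Since $g_{\xi+1}$ pointwise dominates $f_\xi$, for every ultrafilter $D$ disjoint from $I$ with $\cf(\prod A/D)=\lambda$, the $<_D$-cofinality of $\vec f$ in $\prod A/D$ is inherited by $\vec g$. Thus $\vec g$ is a universal cofinal sequence for $\lambda$ and possesses an exact upper bound $f$, so \cref{lemma: weak to pcf} applies to yield $B_\lambda=\{a\in A : f(a)=a\}$ as a generator for $J_{<\lambda^+}^I[A]$.

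The principal obstacle is verifying that some $\theta$ genuinely satisfies all four conditions required by \cref{thm: good sequences} when the ideal is $J_{<\lambda}^I$, namely that $\theta<\lambda$ is regular, $\theta^{++}<\lambda$, $\{a\in A : a\leq\theta^{++}\}\in J_{<\lambda}^I$, and $J_{<\lambda}^I$ is $\theta$-regular. The membership condition is automatic from $\theta^{++}<\lambda$, because any ultrafilter concentrating on $\{a\in A : a\leq\theta^{++}\}$ yields a product of cofinality at most $\theta^{++}<\lambda$, placing the set in $J_{<\lambda}^I$; and any regular $\theta\geq\reg(I)$ makes $J_{<\lambda}^I$ $\theta$-regular by monotonicity. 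In the generic case $\lambda>\reg(I)^{++}$, the choice $\theta=\reg(I)$ works. The edge case $\min(A)\leq\lambda\leq\reg(I)^{++}$ requires a minor adjustment, but progressivity of $A$ over $I$ keeps the numerics well-behaved.
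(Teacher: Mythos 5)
Your proposal follows essentially the same route as the paper: obtain a universal cofinal sequence via \cref{thm: universal sequences}, upgrade it via \cref{thm: good sequences} applied over $J_{<\lambda}^I$ (using $\lambda$-directedness from \cref{lemma: directedness} and the transfer $\reg(J_{<\lambda}^I)\leq\reg(I)$, which you correctly extract from condition (3) of \cref{lem: reg}), observe the new sequence is still universal because it pointwise dominates the old one, and invoke \cref{lemma: weak to pcf}. The paper makes exactly these moves.

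Two cautions on the details. First, your claim that ``any ultrafilter concentrating on $\{a\in A : a\leq\theta^{++}\}$ yields a product of cofinality at most $\theta^{++}$'' is false in general: an ultrafilter living on a bounded set of regular cardinals can perfectly well produce a larger cofinality (think $\pcf(\{\aleph_n : n<\omega\})$). The reason the membership condition holds for the choice $\theta=\reg(I)$ is instead that $\min(A)>\reg(I)$, so $\{a\in A : a\leq \reg(I)^{++}\}\subseteq\{\reg(I)^+,\reg(I)^{++}\}$ is finite, forcing any ultrafilter concentrating on it to be principal. Second, ``any regular $\theta\geq\reg(I)$ makes $J_{<\lambda}^I$ $\theta$-regular by monotonicity'' is likewise not justified, since $\theta$-regularity is not upward monotone in $\theta$ (indecomposability is not hereditary, as the paper itself notes). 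What you actually need, and what is true, is that $J_{<\lambda}^I$ is $\reg(I)$-regular specifically, which follows from the same transfer argument you already gave: $I$ is $\reg(I)$-regular by definition, and condition (3) passes from $I$ to any extending ideal at the same $\theta$. Finally, you leave the edge case $\min(A)\leq\lambda\leq\reg(I)^{++}$ unresolved; the paper disposes of it directly by noting $\{\lambda\}$ itself serves as a generator there.
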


\begin{proof}
Fix $\lambda\in\pcf_I(A)$. Note that if $\lambda\in \pcf_I(A)$ and $\lambda\leq\reg(I)^{++}$, then $\{\lambda\}\notin I$ and, desired generator is simply $\{\lambda\}$.  So, assume that $\reg(I)^{++}<\lambda$ and apply \cref{thm: universal sequences} to obtain a universal cofinal sequence $\vec f$ for $\lambda$. As $J:=J_{<\lambda}^I[A]$ is $\lambda$ directed and $\reg(I)^{++}<\lambda$, we may apply \cref{thm: good sequences} to obtain a $<_{J}$-increasing sequence $\vec{g}$ which pointwise dominates $\vec f$ such that $\vec{g}$ has an eub $f$. It is easily seen that any $<_{J}$-increasing sequence which $<_{J}$-dominates $\vec{f}$ is also universal for $\lambda$. Thus, we may apply \cref{lemma: weak to pcf} to $\vec {g}$ to obtain the desired generator 

\begin{equation*}
B=\{a\in A : f(a)=a\}.
\end{equation*}

\end{proof}

As an easy corollary, we can obtain the compactness theorem modulo $I$.

\begin{theorem}[Compactness]\label{thm: compactness}
Suppose that $A$ is progressive over $I$, and let $\langle B_\lambda : \lambda\in \pcf_I(A)\rangle$ be a sequence of generators. For any $I$-positive $X\subseteq A$, we can find $n<\omega$ and $\{\lambda_i : i\leq n\}\subseteq\pcf_{I}(X)$ such that
\begin{equation*}
X\subseteq_I \bigcup_{i\leq n}B_{\lambda_i}.
\end{equation*}
\end{theorem}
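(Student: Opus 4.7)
The plan is to prove compactness by transfinite induction on $\mu := \max\pcf_I(X)$. First I must verify this maximum exists: since any partition of $X$ into $I_X$-positive sets extends (by adjoining $A \setminus X$) to a partition of $A$ into $I$-positive sets, we have $\wsat(I_X) \leq \wsat(I) < \min(A) \leq \min(X)$, so $X$ is weakly progressive over $I_X$. Applying \cref{corollary: maxpcf} to $(X, I_X)$ then gives that $\max\pcf_I(X) = \max\pcf_{I_X}(X)$ exists.

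The induction step splits into two cases. If $X \setminus B_\mu \in I$, then $X \subseteq_I B_\mu$ and we finish with $n = 0$ and $\lambda_0 = \mu \in \pcf_I(X)$. Otherwise $X \setminus B_\mu$ is $I$-positive, and, granting the key claim that $\max\pcf_I(X \setminus B_\mu) < \mu$, the induction hypothesis produces $\lambda_1, \ldots, \lambda_n \in \pcf_I(X \setminus B_\mu)$ with $X \setminus B_\mu \subseteq_I \bigcup_{i=1}^n B_{\lambda_i}$. Applying part (4) of \cref{lem: properties} to $X \setminus B_\mu \subseteq X$ (with ideal $I_X$) gives $\pcf_I(X \setminus B_\mu) \subseteq \pcf_I(X)$, so taking $\lambda_0 = \mu$ yields $X \subseteq_I \bigcup_{i=0}^n B_{\lambda_i}$ as required. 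Well-foundedness of the ordinals guarantees that the induction terminates without the need for an explicit base case.

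The main obstacle, and the heart of the argument, is the claim that $\max\pcf_I(X \setminus B_\mu) < \mu$ whenever $X \setminus B_\mu$ is $I$-positive. Fix $\nu \in \pcf_I(X \setminus B_\mu)$, witnessed by an ultrafilter $D_0$ on $X \setminus B_\mu$ disjoint from $I_{X \setminus B_\mu}$; its natural extension $D$ to an ultrafilter on $A$ is disjoint from $I$, contains $X \setminus B_\mu$, and satisfies $\cf(\prod A/D) = \nu$. Part (4) of \cref{lem: properties} again gives $\nu \leq \mu$. Suppose toward contradiction that $\nu = \mu$. By the corollary stated immediately after the definition of weakly progressive (namely, for weakly progressive $A$, $\cf(\prod A/D) \geq \lambda$ iff $D \cap J_{<\lambda}^I[A] = \emptyset$), we obtain $D \cap J_{<\mu}^I[A] = \emptyset$ but $D \cap J_{<\mu^+}^I[A] \neq \emptyset$. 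Pick $C \in D \cap J_{<\mu^+}^I[A]$. Because $J_{<\mu^+}^I[A] = J_{<\mu}^I[A] + B_\mu$, we have $C \setminus B_\mu \in J_{<\mu}^I[A]$, so $C \setminus B_\mu \notin D$; since $D$ is an ultrafilter with $C \in D$, this forces $C \cap B_\mu \in D$, hence $B_\mu \in D$, contradicting $X \setminus B_\mu \in D$. Therefore $\nu < \mu$, establishing the claim and closing the induction.
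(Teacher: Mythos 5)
Your proof is correct and follows essentially the same strategy as the paper: peel off $B_{\max\pcf_I(X)}$ repeatedly, using the strict decrease of $\max\pcf$ to force termination. You add a genuine service the paper omits, namely an explicit proof that $\max\pcf_I(X\setminus B_\mu)<\mu$ whenever $X\setminus B_\mu$ is $I$-positive (via the generator equation $J^I_{<\mu^+}[A]=J^I_{<\mu}[A]+B_\mu$ and the ultrafilter characterization of $J^I_{<\lambda}$), which the paper simply asserts.
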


\begin{proof}
Recall that $I_X:=\mathcal{P}\cap I$. As $X$ is still progressive over $I_X$, it follows that $\lambda_0:=\max\pcf_{I}(X)$ exists, so let $X_1:=X\setminus B_{\lambda_0}$. If $X_1\in I$, then we have that $X=_I B_{\lambda_1}$. Otherwise, we note that $X_1$ is progressive over $I_{X_1}$ and so $\max\pcf_{I}(X_1)$ exists and is equal to some $\lambda_1<\lambda_0$. Continuing on in this manner, we will reach some finite stage $n<\omega$ such that $\max\pcf_I(X_n)=\lambda_n$ and $X_n\setminus B_{\lambda_n}$. At this point, we are done since

\begin{equation*}
X\subseteq_I \bigcup_{i\leq n}B_{\lambda_i}.
\end{equation*}
\end{proof}
\section{Obstacles and Questions}

With generators in hand, the natural thing to ask is whether or not one can obtain something like the no holes conclusion. That is, can we show that if $A$ is an interval of regular cardinals, then so is $\pcf_I(A)$? We should expect not, as $\pcf_I(A)$ only depends on $A$ modulo $I$, and in general we cannot expect $I^*$ to only concentrate on intervals of regular cardinals. 

\begin{lemma}
It is consistent that $A$ is an interval of regular cardinals, while $\pcf_I(A)$ fails to be.
\end{lemma}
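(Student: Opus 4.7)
The plan is to exploit part (6) of \cref{lem: properties}, which tells us that $\pcf_I(A)=\pcf_{I_B}(B)$ whenever $B\subseteq A$ satisfies $A\setminus B\in I$. So to prove the lemma I only need to embed, as an $I$-large subset of some tiny interval $A$, a set $B$ whose pcf is manifestly not an interval.

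I would take $A=\{\aleph_1,\aleph_2,\aleph_3\}$, let $I=\{\emptyset,\{\aleph_2\}\}$ be the ideal on $A$ whose only nontrivial element is $\{\aleph_2\}$, and set $B=\{\aleph_1,\aleph_3\}$. Then $A\setminus B=\{\aleph_2\}\in I$, so $A=_I B$ and \cref{lem: properties}(6) gives $\pcf_I(A)=\pcf_{I_B}(B)$. The ideal $I_B$ is trivial, and since $|B|=2$, every ultrafilter on $B$ disjoint from $I_B$ is principal; the two principal ultrafilters produce $\cf(\prod B/D)\in\{\aleph_1,\aleph_3\}$. Hence $\pcf_I(A)=\{\aleph_1,\aleph_3\}$, which has a hole at $\aleph_2$ and is therefore not an interval of regular cardinals.

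There is really no obstacle to this argument: the construction is outright ZFC, so the consistency hypothesis in the statement is vacuous. The only thing worth double-checking is that the example is not ruled out by the progressiveness hypotheses used throughout Section 3, since otherwise one might dismiss the witness as pathological. A direct inspection shows that a partition of $A$ into $I$-positive pieces can have at most two parts (any third part would consist of $\{\aleph_2\}$ alone), so $\wsat(I)=3$; similarly $\reg(I)=\aleph_0$ since $A$ is finite. Both quantities lie strictly below $\min(A)=\aleph_1$, so $A$ is even progressive over $I$ in the sense of the paper. This would confirm that the failure of the no-holes conclusion is not an artifact of using badly-behaved ideals, but persists inside the well-behaved regime in which the pcf theorem of Section 3 applies.
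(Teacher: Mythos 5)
Your proof is correct and proves the lemma outright in ZFC, but it takes a genuinely different (and much more elementary) route than the paper. The paper works inside a model where $\aleph_\omega$ is strong limit and $2^{\aleph_\omega}=\aleph_{\omega+4}$, takes $A=[\aleph_2,\aleph_\omega)\cap\mathrm{REG}$, and defines $I$ so that $I^*$ concentrates on the union $B_1\cup B_3$ of two non-adjacent pcf generators; the resulting $\pcf_I(A)=\{\aleph_{\omega+2},\aleph_{\omega+4}\}$ has its holes strictly above $\sup A$. Both arguments exploit the observation the paper makes right before the lemma --- $\pcf_I(A)$ only depends on $A$ modulo $I$, and $I^*$ need not concentrate on an interval --- but you apply it in the most degenerate way possible, by deleting a middle element of a finite set, while the paper applies it inside the genuinely interesting part of the pcf structure. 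Your computation of $\wsat(I)=3$ and $\reg(I)=\aleph_0$ is right, so your example does land in the progressive regime.

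Two things to keep in mind. First, Section 3 is developed under the standing hypothesis that $A$ has no maximum, so a referee reading the lemma in that context could object to a three-element $A$; the idea survives, though: take $A=[\aleph_2,\aleph_\omega)\cap\mathrm{REG}$ and $I=\{\emptyset,\{\aleph_3\}\}$, which is still progressive over $A$ and forces $\aleph_3\notin\pcf_I(A)$ while $\aleph_2,\aleph_4\in\pcf_I(A)$. Second, and more to the point, in both your finite example and this infinite adaptation the hole lies inside $A$ itself, reflecting nothing more than a principal ultrafilter being excluded; the paper's example places the hole strictly above $\sup A$, showing that the no-holes conclusion fails even for the cofinalities that require real pcf analysis to produce. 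Your argument is a valid proof of the statement as written, but the paper's construction demonstrates a stronger and more informative failure, which is presumably why the author went to the extra trouble.
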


\begin{proof}
For this, we work in a model where $\aleph_\omega$ is strong limit while $2^{\aleph_\omega}=\aleph_{\omega+4}$, and let $A=[\aleph_2,\aleph_\omega)\cap\mathrm{REG}$. Note in this case that any ideal $I$ over $A$ will be $\theta$-regular for every regular $\theta\geq \aleph_1$. Further, we have that $\pcf(A)=[\aleph_2,\aleph_{\omega+4}]\cap\mathrm{REG}$ by the classical pcf theory, and in particular we have generators for each $\lambda\in \pcf(A)$. So we let $B_i=B_{\aleph_{\omega+i}}$ for each $1\leq 1\leq 4$, noting that we may assume that the sets $B_i$ are disjoint since $B_i$ is only unique modulo $J_{<\aleph_{\omega+i}}[A]$. Finally, let $B=B_1\cup B_3$ and let $I$ be the ideal over $A$ defined by

\begin{equation*}
X\in I \iff |X\setminus B|<\aleph_0.
\end{equation*}
\\

We claim that $\pcf_I(A)=\{\aleph_{\omega+2},\aleph_{\omega+4}\}$. We begin by noting that each $B_i$ is unbounded in $A$, and so $I$ extends the ideal of bounded sets. For each $1<n<\omega$, we have that $J_{<\aleph_n}[A]=\mathcal{P}(\{\aleph_2,\ldots, \aleph_{n-1}\})$ and so $\{\aleph_n\}$ is a generator for $J_{<\aleph_{n+1}}[A]$. Therefore, $B_{\lambda}\in I$ for each $\lambda\in\pcf(A)\setminus\{\aleph_{\omega+2},\aleph_{\omega+4}\}$. For such a $\lambda$, let $\vec{f}=\langle f_\xi : \xi<\lambda\rangle$ be a universal cofinal sequence for $\lambda$ with exact upper bound $f\in{}^A\mathrm{ON}$ such that $B_\lambda=\{a\in A : f(a)=a\}$ (recall that this is how we obtain generators in the first place). In this case, $f+1\in\prod A/I$ and so $\vec{f}$ has a $<_I$-upper bound. As $\vec{f}$ is universal, it follows that there is no ultrafilter $D$ over $A$ extending  the dual of $I$ with $cf(\prod A/D)=\lambda$.

Now we only need to show that $\aleph_{\omega+2},\aleph_{\omega+4}\in\pcf_I(A)$ and we are done. For this, simply note that $B_i$ is $I$-positive for $i=2$ or $4$ and so we can find an ultrafilter $D$ over $A$ disjoint from $I$ containing $B_i$. Let $\vec{f}=\langle f_\xi : \xi<\aleph_{\omega+i}\rangle$ be a universal cofinal sequence for $\aleph_{\omega+1}$ with exact upper bound $f\in{}^A\mathrm{ON}$ such that $B_i=\{a\in A : f(a)=a\}$. Then $\vec{f}$ is cofinal in $\prod A/D$, which means that $\aleph_{\omega+i}\in\pcf_I(A)$ for $i=2$ or $4$. 
\end{proof}

The next thing to note is that, even though the proofs may be different, we obtained generators for $\lambda\in\pcf_I(A)$ by generalizing the standard techniques. So, we might ask if it is possible to employ this strategy to obtain transitive generators.

\begin{definition}
Suppose that $A$ is a set of regular cardinals, and $A\subseteq N\subseteq \pcf(A)$ is such that $N$ carries a generating sequence $B=\langle B_\lambda : \lambda\in N\rangle$. We say that $B$ is transitive if for every $\lambda\in N$, if $\theta\in B_\lambda$, then $B_\theta\subseteq B_\lambda$.
\end{definition}

Unfortunately, there are a number of obstacles to obtaining transitive generators through this route. In order to explain precisely what these obstacles are, we need several tools involving elementary submodels. Following standard abuse of notation, we will use $H(\chi)$  to refer to the structure $(H(\chi),\in,<_\chi)$ where $<_\chi$ well orders $H(\chi)$.

\begin{definition}
Suppose $\kappa<\chi$ are regular cardinals. We say $N\prec H(\chi)$ is a $\kappa$-presentable substructure if $N=\bigcup_{\alpha<\kappa}N_\alpha$ where

\begin{enumerate}
\item $N_\alpha\prec H(\chi)$ for each $\alpha<\kappa$;
\item $\langle N_\alpha : \alpha<\kappa\rangle$ is $\subseteq$-increasing and continuous;
\item $N_\alpha\in N_{\alpha+1}$ for each $\alpha<\kappa$;
\item $\kappa+1\subseteq N_0$;
\item $|N_\alpha|=\kappa$ for each $\alpha<\kappa$.
\end{enumerate}
\end{definition}

\begin{definition}
For any structure $N$, we let $Ch_N$ denote the characteristic function of $N$, defined by setting
\begin{equation*}
\Ch_N(\mu)=\sup N\cap\mu,
\end{equation*}
where $\mu$ is a regular cardinal. Note that if $|N|<\mu$, then $\Ch_N(\mu)\in\mu$.
\end{definition}

Let $A$ be a set of regular cardinals with $|A|^+<\min(A)$, and fix a sufficiently large and regular $\chi$. Let $N\prec H(\chi)$ be a $\kappa$-presentable structure with $N=\bigcup_{\alpha<\kappa}N_\alpha$, where $|A|<\kappa<\min(A)$.  The arguments for producing transitive generators (Claim 6.7 and 6.7A of \cite{Sh430} and section 6 of \cite{AbMag}) rely on the fact that, for every $\lambda\in\pcf(A)\cap N$, we can code a generator $B_\lambda$ for $\lambda$ by way of $N$. More precisely, the key observation is Lemma 5.8 of \cite{AbMag}, which we quote in a simplified form.

\begin{lemma}
Suppose that $A, \chi, N$ are as above with $A\in N_0$. For $\lambda\in \pcf(A)\cap N$, let $\vec{f}=\langle f_\alpha : \alpha<\lambda\rangle$ be a universal cofinal sequence for $\lambda$, and let $\gamma=\sup N\cap \lambda$. Then the set
\begin{equation*}
b_\lambda=\{a\in A : \Ch_N(a)=f_\gamma(a)\}
\end{equation*}
is a generator for $\lambda$.
\end{lemma}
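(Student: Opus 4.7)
The plan is to verify the two defining conditions for $b_\lambda$ to be a generator of $J_{<\lambda^+}[A]$ over $J_{<\lambda}[A]$: namely, $b_\lambda\in J_{<\lambda^+}[A]$, and every ultrafilter $D$ on $A$ with $\cf(\prod A/D)=\lambda$ contains $b_\lambda$. The preparatory step is to arrange $\vec f\in N$: since $\lambda\in N$ and universal cofinal sequences for $\lambda$ (supplied by \cref{thm: universal sequences}) are definable from $\lambda$ in $H(\chi)$ via $<_\chi$, one may replace $\vec f$ by this canonical choice and assume $\vec f\in N$.

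With $\vec f\in N$, for every $\beta\in N\cap\lambda$ one has $f_\beta\in N$, so for each $a\in A$ the value $f_\beta(a)$ lies in $N\cap a$. Since $\kappa<\min(A)$ every $a\in A$ is regular with $a>\kappa$, and since $f_\beta(a)+1\in N\cap a$ as well, this forces $f_\beta(a)<\sup(N\cap a)=\Ch_N(a)$, so $f_\beta<\Ch_N$ pointwise on $A$. Combined with the $<_{J_{<\lambda}}$-monotonicity of $\vec f$ and the cofinality of $N\cap\lambda$ in $\gamma$, this yields $f_\delta<_{J_{<\lambda}}\Ch_N$ for every $\delta<\gamma$.

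For the complement direction, fix an ultrafilter $D$ on $A$ with $\cf(\prod A/D)=\lambda$; universality gives a least $\alpha(D)<\lambda$ with $\Ch_N<_D f_{\alpha(D)}$, and the preparatory estimate forces $\alpha(D)\geq\gamma$. Assuming $f_\gamma(a)=\sup_{\delta<\gamma}f_\delta(a)$ pointwise (the central obstacle, addressed below), we get $f_\gamma\leq\Ch_N$ pointwise, and minimality of $\alpha(D)$ pins down $\alpha(D)=\gamma$ with $f_\gamma=_D\Ch_N$, i.e., $b_\lambda\in D$. For the membership $b_\lambda\in J_{<\lambda^+}[A]$, any $D$ with $b_\lambda\in D$ and $\cf(\prod A/D)>\lambda$ would provide a $<_D$-bound for $\vec f$; on $b_\lambda$ we have $\Ch_N=f_\gamma$, and an elementarity argument in $N$ applied to ``some $f_\beta$ dominates $\Ch_N$ on a positive set'' extracts $\beta\in N\cap\lambda$ with $f_\beta\geq_D\Ch_N$ on a positive subset of $b_\lambda$, contradicting the pointwise estimate.

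The principal obstacle is the continuity hypothesis on $\vec f$ at $\gamma$: sequences produced by \cref{thm: universal sequences} are only $<_{J_{<\lambda}}$-increasing, so a priori $f_\gamma$ need not equal the pointwise supremum of the preceding functions. The resolution is to apply \cref{thm: good sequences} to the universal sequence, obtaining $\vec g$ which pointwise dominates $\vec f$ (hence remains universal, since any sequence pointwise dominating a universal cofinal sequence is itself universal), is $\mathrm{Good}_\theta$ with an exact upper bound, and is built so that $g_\delta$ is the pointwise supremum of $\{g_\eta:\eta<\delta\}$ at the relevant limits $\delta$. Provided $\gamma$ has cofinality of the required form---which can be ensured by an appropriate choice of $\kappa$ and presentation of $N$---continuity at $\gamma$ holds, and $b_\lambda$ computed with respect to $\vec g$ is the desired generator.
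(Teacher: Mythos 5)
The paper itself does not prove this lemma; it is quoted ``in a simplified form'' from Lemma~5.8 of \cite{AbMag} and used only to motivate the discussion of why transitive generators resist the generalization. So there is no internal proof to compare against, and what follows evaluates your argument on its own terms. Your high-level architecture --- arrange $\vec f\in N$ via the $<_\chi$-least choice, observe $f_\delta<\Ch_N$ pointwise for $\delta\in N\cap\lambda$, reduce matters to a continuity property of $\vec f$ at $\gamma=\sup(N\cap\lambda)$, and then check the two conditions characterizing a generator --- matches the Abraham--Magidor strategy, and you correctly identify continuity at $\gamma$ as the crux. The difficulty is that several of the individual steps do not go through as written.

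First, the implication ``$f_\gamma(a)=\sup_{\delta<\gamma}f_\delta(a)$ pointwise $\Rightarrow f_\gamma\leq\Ch_N$ pointwise'' is false. You only know $f_\delta<\Ch_N$ pointwise for $\delta\in N\cap\lambda$; for $\delta<\gamma$ with $\delta\notin N$ you only have $f_\delta<_{J_{<\lambda}}\Ch_N$, so $f_\delta$ may exceed $\Ch_N$ on a $J_{<\lambda}$-small (but nonempty) set, and the pointwise supremum over all $\delta<\gamma$ can therefore overshoot $\Ch_N$. You also misstate the construction of \cref{thm: good sequences}: at limits of the relevant cofinality it sets $g_\delta=\sup\{g_\xi:\xi\in E_\delta\}$ over a club $E_\delta\subseteq\delta$, not $\sup\{g_\eta:\eta<\delta\}$. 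To make this step sound one needs either $E_\gamma\cap N$ cofinal in $\gamma$ \emph{together with} pointwise monotonicity of $\vec g$ along $E_\gamma$, or else to compare $\Ch_N$ with the functions $\Ch_{N_\alpha}\upharpoonright A$ (which, unlike $\Ch_N$, do lie in $N$) and run an argument through the presentability filtration. That is precisely the obedient-sequence machinery of \cite{AbMag}, not something that an ``appropriate choice of $\kappa$'' supplies for free --- especially since matching $\cf(\gamma)=\kappa$ with the $\theta^{++}$ of \cref{thm: good sequences} imposes its own constraints.

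Second, both halves of the generator verification have gaps. For the direction ``$\cf(\prod A/D)=\lambda\Rightarrow b_\lambda\in D$,'' even granting $f_\gamma\leq\Ch_N$ pointwise you only obtain $\Ch_N\not<_D f_\gamma$ and hence $\alpha(D)>\gamma$; this is perfectly compatible with $f_\gamma<_D\Ch_N$, i.e.\ $b_\lambda\notin D$, so ``minimality of $\alpha(D)$ pins down $\alpha(D)=\gamma$'' is not a valid inference. The missing ingredient is that $\Ch_N$ is a $\leq_D$-\emph{least} upper bound of $\vec f\upharpoonright\gamma$ (which again uses presentability of $N$ through the $\Ch_{N_\alpha}$'s); combined with $f_\gamma$ also being a $\leq_D$-lub, this yields $f_\gamma=_D\Ch_N$. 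For the direction $b_\lambda\in J_{<\lambda^+}[A]$, the phrase ``an elementarity argument in $N$ \ldots extracts $\beta\in N\cap\lambda$ with $f_\beta\geq_D\Ch_N$'' is not yet an argument: neither the hypothetical $<_D$-bound on $\vec f$ nor the ultrafilter $D$ is an element of $N$, so it is unclear which first-order statement $N$ is being asked to reflect, and as written the step does not go through.

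In short: the skeleton and the diagnosis of the obstacle are right, but the pointwise estimate at $\gamma$, the $\leq_D$-lub property of $\Ch_N$, and the membership direction each need a genuine argument rather than an appeal to ``continuity'' and ``elementarity.''
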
  

The generators obtained in this way are subsets of the ordinal closure of $N$, which has cardinality $\kappa$. So suppose we only ask that $\reg(A)<\kappa<\min(A)$, and we somehow manage to obtain transitive generators for some $N\subseteq\pcf_I(A)$ using $\kappa$-presentable structures. In doing so, we will have obtained generators $b_\lambda$ above, which must have size $\kappa$. But then, we can employ \cref{thm: compactness} to see that there are $\lambda_i\in\pcf_I(A)$ for $1\leq i\leq n<\omega$ such that 

\begin{equation*}
A\subseteq_I \bigcup_{1\leq i\leq n}b_{\lambda_i}.
\end{equation*}
\\
So then $I^*$ concentrates on a set of size $\kappa$. As $\pcf_I(A)$ will remain the same if we replace $A$ with an $I$-equivalent set, this amounts to doing pcf theory in the classical case. So in order to have any hope of obtaining transitive generators for more than just the classical case, we would need to use different techniques and perhaps utilize stronger assumptions on $A$, $I$, or even $\pcf_I(A)$ than just $\reg(I)<\min(A)$.

\bibliographystyle{plain}
\bibliography{Trichotomy}
\end{document}